\newcommand{\no}[1]{#1}
\renewcommand{\no}[1]{}
\renewcommand{\Delta}{\upDelta}}
\date{\today}
\newtheorem{theorem}{Theorem}[section]
\newtheorem{proposition}{Proposition}[section]
\newtheorem{lemma}{Lemma}[section]
\newtheorem{corollary}{Corollary}[section]
\theoremstyle{remark}
\numberwithin{equation}{section}
\title[Logarithmic stability in determining  a boundary coefficient]{Logarithmic stability in determining a boundary coefficient in an IBVP for the wave equation}
\author[Ka\"{\i}s Ammari]{Ka\"{\i}s Ammari}
\address{UR Analysis and Control of PDEs, UR 13ES64, Department of Mathematics, Faculty of Sciences of Monastir, University of Monastir, 5019 Monastir, Tunisia }
\email{kais.ammari@fsm.rnu.tn}
\author[Mourad Choulli]{Mourad Choulli}
\address{Institut \'Elie Cartan de Lorraine, UMR CNRS 7502, Universit\'e de Lorraine, Boulevard des Aiguillettes, BP 70239, 54506 Vandoeuvre les Nancy cedex - Ile du Saulcy, 57045 Metz cedex 01, France}
\email{mourad.choulli@univ-lorraine.fr}
\date{}
\begin{document}

\begin{abstract}
In \cite{AC1} we introduced a method combining together an observability inequality and a spectral decomposition to get a logarithmic stability estimate for the inverse problem of determining both the potential and the damping coefficient in a dissipative wave equation from boundary measurements. The present work deals with an adaptation of that method to obtain a logarithmic stability estimate for the inverse problem of determining a boundary damping coefficient from boundary measurements. As in our preceding work, the different boundary measurements are generated by varying one of the initial conditions.

\medskip
\noindent
{\bf Keywords}: inverse problem, wave equation, boundary damping coefficient, logarithmic stability, boundary measurements.

\medskip
\noindent
{\bf MSC}:  35R30.
\end{abstract}

\maketitle

\tableofcontents


\section{Introduction}
We are concerned with an inverse problem for the wave equation when the spatial domain is the square $\Omega=(0,1)\times (0,1)$. To this end we consider the following initial-boundary value problem (abbreviated to IBVP in the sequel) :
\begin{equation}\label{1.1}
\left\{
\begin{array}{lll}
 \partial _t^2 u - \Delta u = 0 \;\; &\mbox{in}\;   Q=\Omega \times (0,\tau), 
 \\
u = 0 &\mbox{on}\;  \Sigma _0=\Gamma _0 \times (0,\tau), 
\\
\partial _\nu u +a\partial _tu=0 &\mbox{on}\;  \Sigma _1=\Gamma _1 \times (0,\tau), 
\\
u(\cdot ,0) = u^0,\; \partial_t u (\cdot ,0) = u^1.
\end{array}
\right.
\end{equation}
Here 
\begin{align*}
&\Gamma _0=((0,1)\times \{1\})\cup (\{1\}\times (0,1)),
\\
&\Gamma _1=((0,1)\times \{0\})\cup (\{0\}\times (0,1))
\end{align*}
and $\partial _\nu =\nu \cdot \nabla$ is the derivative along $\nu$, the unit normal vector pointing outward of $\Omega$. We note that $\nu$ is everywhere defined except at the vertices of $\Omega$ and we denote by $\Gamma = \Gamma_0 \cup \Gamma_1$. The boundary coefficient $a$ is usually called the boundary damping coefficient.

\smallskip
In the rest of this text we identify $a_{|(0,1)\times \{0\}}$ by $a_1=a_1(x)$, $x\in (0,1)$ and $a_{|\{0\}\times (0,1)}$ by $a_2=a_2(y)$, $y\in (0,1)$. In that case it is natural to identify $a$, defined on $\Gamma _1$, by the pair $(a_1,a_2)$.
 
\subsection{The IBVP} 

We fix $1/2<\alpha \le 1$ and we assume that $a\in \mathscr{A}$, where
\[
\mathscr{A}=\{ b=(b_1,b_2)\in C^\alpha ([0,1])^2,\; b_1(0)=b_2(0),\; b_j\geq 0\}. 
\]
This assumption guarantees that the multiplication operator by $a_j$, $j=1,2$, defines a bounded operator on $H^{1/2}((0,1))$. The proof of this fact will be proved in Appendix A.

\smallskip
Let $V=\{ u\in H^1(\Omega );\; u=0\; \textrm{on}\; \Gamma _0\}$ and we consider on $V\times L^2(\Omega )$ the linear unbounded operator $A$ given by
\[
A_a= (w,\Delta v),\quad D(A_a)=\{ (v,w)\in V\times V;\; \Delta v\in L^2(\Omega )\; \textrm{and}\; \partial _\nu v=-aw\; \textrm{on}\; \Gamma _1\}.
\]

One can prove that $A_a$ is a m-dissipative operator on the Hilbert space $V\times L^2(\Omega )$ (for the reader's convenience we detail the proof in Appendix B). Therefore, $A_a$ is the generator of a strongly continuous semigroup of contractions $e^{tA_a}$. Hence, for each $(u^0,u^1)$, the IBVP \eqref{1.1} possesses a unique solution denoted by $u_a=u_a(u^0,u^1)$ so that
\[
(u_a,\partial _tu_a)\in C([0,\infty );D(A_a))\cap C^1([0,\infty ),V\times L^2(\Omega )).
\]

\subsection{Main result}

For $0<m\leq M$, we set
\[
\mathscr{A}_{m,M}=\{ b=(b_1,b_2)\in \mathscr{A}\cap H^1(0,1)^2;\; m\leq b_j,\; \|b_j\|_{H^1(0,1)}^2\leq M\}.
\]

Let $\mathcal{U}_0$ given by 
\[
\mathcal{U}_0=\{ v\in V;\; \Delta v\in L^2(\Omega )\; \textrm{and}\; \partial _\nu v=0\; \textrm{on}\; \Gamma _1\}.
\]
We observe that $\mathcal{U}_0\times \{0\}\subset D(A_a)$, for any $a\in \mathscr{A}$.

\smallskip
Let $C_a\in \mathscr{B}(D(A_a);L^2(\Sigma _1))$ defined by
\[
C_a(u^0,u^1)=\partial _\nu u_a(u^0,u^1){_{|\Gamma _1}}.
\]
We define the initial to boundary operator
\[
\Lambda _a :u^0\in \mathcal{U}_0 \longrightarrow C_a(u^0,0)\in L^2(\Sigma _1).
\]
Clearly $C_a\in \mathscr{B}(D(A_a);L^2(\Sigma _1))$ implies that $\Lambda _a\in \mathscr{B}(\mathcal{U}_0;L^2(\Sigma _1))$, when $\mathcal{U}_0$ is identified  to a subspace of $D(A_a)$ endowed with the graph norm of $A_a$. Precisely the norm in $\mathcal{U}_0$ is the following one
\[
\|u^0\|_{\mathcal{U}_0}=\left(\|u^0\|_V^2+\|\Delta u^0\|_{L^2(\Omega )}^2\right)^{1/2}.
\]
Henceforth, for simplicity sake, the norm of $\Lambda_a-\Lambda _0$ in $\mathscr{B}(\mathcal{U}_0;L^2(\Sigma _1))$ will denoted by $\| \Lambda_a-\Lambda _0\|$.

\begin{theorem}\label{theorem1.1}
There exists $\tau _0>0$ so that for any $\tau >\tau _0$, we find a constant $c>0$ depending only on $\tau$ such that
\begin{equation}\label{1.2}
 \|a-0\|_{L^2((0,1))^2} \le cM\left(\left| \ln \left(m^{-1}\| \Lambda_a-\Lambda _0\|\right)\right|^{-1/2}+m^{-1}\|\Lambda_a-\Lambda _0\|_{L^2(\Sigma _1)}\right),
\end{equation}
for each $a\in \mathcal{A}_{m,M}$.
\end{theorem}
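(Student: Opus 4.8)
We sketch a proof of Theorem \ref{theorem1.1}, adapting the strategy of \cite{AC1}.

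\smallskip\noindent\textbf{Reduction and a basic identity.} First, $\Lambda_0=0$: with $a\equiv0$ the boundary condition on $\Sigma_1$ in \eqref{1.1} reads $\partial_\nu u_0=0$, so $\Lambda_0u^0=\partial_\nu u_0(u^0,0)|_{\Sigma_1}=0$. Let $-\Delta_0$ denote the realization of $-\Delta$ on $L^2(\Omega)$ with domain $\mathcal U_0$; then the free solution is $u_0(\cdot,t)=\cos(t\sqrt{-\Delta_0})u^0$. Applying the third line of \eqref{1.1} to $u_a=u_a(u^0,0)$ gives the basic identity
\[
(\Lambda_a-\Lambda_0)u^0=\partial_\nu u_a|_{\Sigma_1}=-\,a\,\partial_t u_a|_{\Sigma_1},\qquad u^0\in\mathcal U_0 .
\]
Let $w=u_a-u_0$. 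Then $\partial_t^2w-\Delta w=0$ in $Q$, $w=0$ on $\Sigma_0$, $w(\cdot,0)=\partial_t w(\cdot,0)=0$, $\partial_\nu w+a\,\partial_t w=-\,a\,\partial_t u_0$ on $\Sigma_1$, and $\partial_\nu w|_{\Sigma_1}=(\Lambda_a-\Lambda_0)u^0$. Multiplying the equation for $w$ by $\partial_t w$, integrating over $\Omega\times(0,t)$ and using $a\ge m$ yields an energy bound of the form
\[
\|w\|_{C([0,\tau];V)}+\|\partial_t w\|_{L^2(\Sigma_1)}\le \frac Cm\,\|a\,\partial_t u_0\|_{L^2(\Sigma_1)} ,
\]
so that $w$ is of the order of $\|a\|$.

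\smallskip\noindent\textbf{Observability for the free problem.} One needs: there exist $\tau_0>0$ and $C>0$ such that, for every $\tau>\tau_0$, each solution $v$ of $\partial_t^2v-\Delta v=0$ in $Q$ with $v=0$ on $\Sigma_0$ and $\partial_\nu v=0$ on $\Sigma_1$ satisfies
\[
\|v(\cdot,0)\|_V^2+\|\partial_t v(\cdot,0)\|_{L^2(\Omega)}^2\le C\int_{\Sigma_1}|\partial_t v|^2 .
\]
Since $\Omega$ is the unit square, this follows from Rellich's multiplier identity with the field $h(x)=x-(1,1)$, for which $h\cdot\nu\le0$ on $\Gamma_0$ and $h\cdot\nu\ge1$ on $\Gamma_1$; the lower order terms are removed by the standard compactness--uniqueness argument resting on Holmgren's theorem, and the four vertices carry no boundary weight. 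This $\tau_0$ is the one in the theorem.

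\smallskip\noindent\textbf{Spectral decomposition and conclusion.} The eigenpairs of $-\Delta_0$ are explicit on the square:
\[
\phi_{k,l}(x,y)=2\cos\big((k-\tfrac12)\pi x\big)\cos\big((l-\tfrac12)\pi y\big),\qquad \lambda_{k,l}=\pi^2\big((k-\tfrac12)^2+(l-\tfrac12)^2\big),\quad k,l\ge1 .
\]
For $u^0=\phi_{k,l}$ one has $u_0(\cdot,t)=\cos(\sqrt{\lambda_{k,l}}\,t)\,\phi_{k,l}$, so on the side $\{y=0\}$ of $\Gamma_1$,
\[
a_1(x)\,\partial_t u_a(x,0,t)=-(\Lambda_a-\Lambda_0)\phi_{k,l}(x,0,t),\qquad \partial_t u_0(x,0,t)=-2\sqrt{\lambda_{k,l}}\,\sin(\sqrt{\lambda_{k,l}}\,t)\cos\big((k-\tfrac12)\pi x\big),
\]
and symmetrically on $\{x=0\}$ with $a_2$ and $\cos((l-\tfrac12)\pi y)$. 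Pairing in $t$ with $\sin(\sqrt{\lambda_{k,l}}\,t)$ over $(0,\tau)$, substituting $\partial_t u_a=\partial_t u_0+\partial_t w$, and inserting the energy bound for $w$ together with the a priori bound $\|a\|_{H^1}^2\le2M$ (hence $\|a\|_{\infty}\lesssim\sqrt M$), one recovers, for each index and up to controlled remainders, the functions $a_1(\cdot)\cos((k-\tfrac12)\pi\cdot)$ and $a_2(\cdot)\cos((l-\tfrac12)\pi\cdot)$, hence the cosine Fourier coefficients of $a_1$ and $a_2$; the observability inequality guarantees that this time pairing is non-degenerate and that the $\mathcal U_0$-norm of the probing data grows only polynomially in $(k,l)$. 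For a truncation level $N$, the low modes are thereby estimated by a term growing like $e^{cN}(m^{-1}\|\Lambda_a-\Lambda_0\|)^2$, while the tail is controlled by the $H^1$ a priori bound by a term decaying in $N$ (of order $M^2/N$); optimizing $N\sim|\ln(m^{-1}\|\Lambda_a-\Lambda_0\|)|$ produces \eqref{1.2}.

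\smallskip\noindent\textbf{Main obstacle.} The crux is the remainder analysis in the spectral step. The difference $w=u_a-u_0$ is not small in absolute terms --- only small relative to $\|a\|$ --- so the term $a\,\partial_t w|_{\Sigma_1}$ cannot be discarded outright; one must show, by expanding $w$ via Duhamel's formula and exploiting the rapid time-oscillation of $u_0$, that its contribution to the extracted Fourier coefficients of $a$ is of genuinely lower order, uniformly in the probed frequency $\lambda_{k,l}$, and then balance this loss against the $H^1$ a priori bound. A secondary, purely technical point is to establish the observability inequality with constants insensitive to the mild ($r^{1/2}$) corner singularities created at the vertices $(1,0)$ and $(0,1)$, where the boundary condition changes type.
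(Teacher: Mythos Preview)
Your proposal has a genuine gap, and you have in fact identified it yourself in the paragraph labelled ``Main obstacle''. In your scheme the boundary identity reads $-(\Lambda_a-\Lambda_0)\phi_{k,l}=a\,\partial_t u_0+a\,\partial_t w$ on $\Sigma_1$. The energy bound you state gives only $\|\partial_t w\|_{L^2(\Sigma_1)}\le Cm^{-1}\|a\,\partial_t u_0\|_{L^2(\Sigma_1)}\le Cm^{-1}\sqrt{\lambda_{k,l}}\,\|a\|_{L^2(\Gamma_1)}$, so the remainder $a\,\partial_t w$ is, after pairing in $t$ and in the trace variable, bounded by $C\|a\|_\infty m^{-1}\sqrt{\lambda_{k,l}}\,\|a\|_{L^2}$. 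The main term produces $\sqrt{\lambda_{k,l}}\,\hat a_j(k)$, so the remainder-to-main ratio is of order $\|a\|_\infty m^{-1}$, independent of $k$. Summing $|k|\le N$ then yields an inequality of the form $\|a\|_{L^2}^2\le C\big(\,\text{data}\,\big)+CN(\|a\|_\infty/m)^2\|a\|_{L^2}^2+M^2/N^2$, which can be absorbed only if $N\lesssim (m/\|a\|_\infty)^2$, a fixed number. No optimisation is possible and the argument does not close. The heuristic ``exploit the rapid time-oscillation of $u_0$'' does not help here: $\partial_t w$ oscillates at the same frequency as $\partial_t u_0$ (it is driven by it), so no extra cancellation occurs.

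The paper circumvents this entirely by a different decomposition. Instead of using observability for the free problem and treating the $a$-dependence of $u_a$ as a perturbation, one views $u=u_a-u_0$ as the solution of the \emph{damped} problem (i.e.\ with generator $A_a$) forced by the separated source $\lambda(t)w_a$, where $w_a\in V'$ is the boundary functional $v\mapsto -\sqrt{\lambda_{k,l}}\int_{\Gamma_1}a\phi_{k,l}v$. The observability inequality needed is then that of $(A_a,C_a)$ (Corollary~\ref{corollary1.1}), which is uniform over $a\in\mathscr A_{m,M}$. Feeding this into the abstract inverse-source estimate of Theorem~\ref{theorem2.1}/Proposition~\ref{proposition2.1} gives directly $\|w_a\|_{V'}\le Ce^{\lambda_{k,l}\tau^2}\|\partial_\nu u\|_{L^2(\Sigma_1)}$, with \emph{no remainder}: the $a$-dependence of the dynamics has been absorbed into the operator $A_a$ itself. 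One then tests $w_a$ against $(a_1\otimes a_2)\phi_{k,l}\in V$ to extract $\|a_j\phi_k\|_{L^2}^2$ and hence the Fourier coefficients $\hat a_j(k)$. The exponential factor is $e^{c k^2}$ (not $e^{cN}$ as you wrote), and balancing against the $H^1$ tail $M^2/N^2$ gives $N^2\sim|\ln(m^{-1}\|\Lambda_a-\Lambda_0\|)|$ and the stated $|\ln|^{-1/2}$ rate.
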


We point out that our choice of the domain $\Omega$ is motivated by the fact the spectral analysis of the laplacian under mixed boundary condition is very simple in that case. However this choice has the inconvenient that the square domain $\Omega$ is no longer smooth. So we need to prove an observability inequality associated to this non smooth domain. This is done by adapting the existing results. We note that the key point in establishing this observability inequality relies on a Rellich type identity for the domain $\Omega$. 

\smallskip
The inverse problem we discuss in the present paper remains largely open for an arbitrary (smooth) domain as well as for the stability around a non zero damping coefficient. Uniqueness and directional Lipschitz stability, around the origin, was established by the authors in \cite{AC2}.

\smallskip
The determination of a potential and/or the sound speed coefficient in a wave equation from the so-called Dirichlet-to-Neumann map was extensively studied these last decades. We refer to the comments in \cite{AC1} for more details.


\section{Preliminaries}

\subsection{Extension lemma}

We decompose $\Gamma _1$ as follows $\Gamma _1=\Gamma _{1,1}\cup \Gamma_{1,2}$, where $\Gamma _{1,1}=(0,1)\times \{0\}$ and $\Gamma_{1,2}=\{0\}\times (0,1)$. Similarly, we write $\Gamma _0=\Gamma_{0,1}\cup\Gamma _{0,2}$, with $\Gamma_{0,1}=\{1\}\times (0,1)$ and $\Gamma_{0,2}=(0,1)\times \{1\}$.

\smallskip

Let $(g_1,g_2)\in L^2((0,1))^2$. We  say that the pair $(g_1,g_2)$ satisfies the compatibility condition of the first order at the vertex $(0,0)$ if
\begin{equation}\label{1.3}
\int_0^1|g_1(t)-g_2(t)|^2\frac{dt}{t}<\infty .
\end{equation}

Similarly, we can define the compatibility condition of the  first order at the other vertices of $\Omega$.

\smallskip
We need also to introduce compatibility conditions of the second order. Let $(f_j,g_j)\in H^1((0,1))\times L^2((0,1))$, $j=1,2$. We say that the pair $[(f_1,g_1),(f_2,g_2)]$ satisfies the compatibility conditions of second order at the vertex $(0,0)$ when
\begin{equation}
f_1(0)=f_2(0), \quad \int_0^1|f'_1(t)-g_2(t)|^2\frac{dt}{t}<\infty \quad \textrm{and}\quad  \int_0^1|g_1(t)-f'_2(t)|^2\frac{dt}{t}<\infty .
\end{equation}

The compatibility conditions of the second order at the other vertices of $\Omega$ are defined in the same manner.

\smallskip
The following theorem is a special case of \cite[Theorem 1.5.2.8, page 50]{Gr}.

\begin{theorem}\label{theorem1.3}
(1) The mapping 
\[
w\longrightarrow (w_{|\Gamma _{0,1}},w_{|\Gamma _{0,2}},w_{|\Gamma _{1,1}},w_{|\Gamma _{1,2}})=(g_1,\ldots ,g_4),
\]
defined on $\mathscr{D}(\overline{\Omega} )$ is extended from $H^1(\Omega )$ onto the subspace of $H^{1/2}((0,1))^4$ consisting in functions $(g_1,\ldots ,g_4)$ so that the compatibility condition of the first order  is satisfied at each vertex of $\Omega$ in a natural way with the pairs $(g_j,g_k)$. 
\\
(2) The mapping
\[
w\rightarrow (w_{|\Gamma _{0,1}}, \partial _x w_{|\Gamma _{0,1}}, w_{|\Gamma _{0,2}},\partial _y w_{|\Gamma _{0,2}} w_{|\Gamma _{1,1}},-\partial _y w_{|\Gamma _{1,1}}, w_{|\Gamma _{1,2}},-\partial _xw_{|\Gamma _{1,2}})=((f_1,g_1),\ldots (f_4,g_4))
\]
defined on $\mathscr{D}(\overline{\Omega} )$ is extended from $H^2(\Omega )$ onto the subspace of $[H^{3/2}((0,1))\times H^{1/2}((0,1))]^4$ of functions $((f_1,g_1),\ldots (f_4,g_4))$ so that the compatibility conditions of the second order are satisfied at each vertex of $\Omega$ in a natural way with the pairs $[(f_j,g_j),(f_k,g_k)]$.
\end{theorem}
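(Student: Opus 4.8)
The plan is to derive both parts directly from Grisvard's trace theorem \cite[Theorem 1.5.2.8]{Gr} by specializing it to the unit square, so that the only real work lies in translating its abstract vertex compatibility conditions into the explicit weighted integrals of \eqref{1.3} and of the second-order conditions. I would first recall the structure of Grisvard's result: for a bounded polygon with sides $\Gamma^{(1)},\dots,\Gamma^{(N)}$ and opening angles $\omega_1,\dots,\omega_N$, the map $w\mapsto(\partial_\nu^kw_{|\Gamma^{(j)}})_{0\le k\le m-1,\,1\le j\le N}$, initially defined on $\mathscr{D}(\overline{\Omega})$, extends to a bounded surjection from $H^m(\Omega)$ onto the subspace of $\prod_j\prod_k H^{m-k-1/2}(\Gamma^{(j)})$ cut out by finitely many compatibility relations localized at the vertices. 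Each such relation is either an equality of point values, in force when the relevant fractional order exceeds $1/2$ so that traces are continuous, or a Hardy-type condition asserting that a suitable difference of the two edge data meeting at a vertex is square-integrable against the weight $dt/t$, with $t$ the arclength to the vertex, in force when the fractional order equals $1/2$ and point values are undefined.

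Next I would carry out the specialization. For $\Omega=(0,1)\times(0,1)$ there are $N=4$ sides $\Gamma_{0,1},\Gamma_{0,2},\Gamma_{1,1},\Gamma_{1,2}$, each isometric to $(0,1)$, and four vertices at which two sides meet orthogonally, so $\omega_j=\pi/2$ throughout. Taking $m=1$ leaves only $k=0$ and the single edge space $H^{1/2}((0,1))$ on each side, producing the codomain $H^{1/2}((0,1))^4$ of part (1); taking $m=2$ leaves $k\in\{0,1\}$, hence the pair $H^{3/2}((0,1))\times H^{1/2}((0,1))$ on each side, producing the codomain of part (2). Thus the domains and codomains of the two trace maps come out exactly as stated, and it remains only to match the compatibility conditions.

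To match them I would use that at each vertex the two incident edges are perpendicular and axis-parallel, so the outward normal to one edge is, up to sign, the tangent direction of the other; concretely $\partial_\nu=-\partial_y$ on $\Gamma_{1,1}$ and $\partial_\nu=-\partial_x$ on $\Gamma_{1,2}$, which explains the signs in the second slots of the trace vector in part (2). For $m=1$ the two incident traces lie in $H^{1/2}$, have no point values, and Grisvard's relation is exactly that their difference be square-integrable against $dt/t$ near the vertex; the pair $(g_j,g_k)$ meeting at $(0,0)$ reproduces \eqref{1.3}, and the remaining vertices are identical. For $m=2$ the traces lie in $H^{3/2}$ and are continuous, so the relation splits into the point-value equality $f_1(0)=f_2(0)$ together with a matching of first derivatives; since the angle is a right angle, the tangential derivative $f_1'$ of the trace on one edge is identified with the normal-derivative datum $g_2$ on the adjacent edge, and as these data lie only in $H^{1/2}$ the matching is again weighted, giving precisely $\int_0^1|f_1'(t)-g_2(t)|^2\,dt/t<\infty$ together with $\int_0^1|g_1(t)-f_2'(t)|^2\,dt/t<\infty$.

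The main point to watch, more a bookkeeping matter than a genuine obstacle, will be the orientations: each edge is parametrized by $(0,1)$, yet two edges sharing a vertex may induce opposite arclength parameters there, so I would check that the indices and signs in Grisvard's relations align with the parametrizations fixed for $\Gamma_{0,1},\dots,\Gamma_{1,2}$ before reading off \eqref{1.3} and the second-order conditions. I would also record that $\pi/\omega=2\in\mathbb{N}$ at a right angle, which places us in the regular regime of \cite{Gr} where no exceptional logarithmic corrections appear and the compatibility conditions keep the clean weighted-$L^2$ form above. The surjectivity half of each claim — that every tuple satisfying the stated conditions is the trace of some $H^1(\Omega)$, respectively $H^2(\Omega)$, function — is the substantive content, and I would invoke the explicit lifting construction of \cite{Gr} for it rather than reprove it.
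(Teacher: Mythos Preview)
Your proposal is correct and follows the same approach as the paper: the paper simply states that Theorem~\ref{theorem1.3} is a special case of \cite[Theorem 1.5.2.8, page 50]{Gr} and gives no proof, so your plan to specialize Grisvard's trace theorem to the unit square and read off the compatibility conditions is exactly what is intended. Your extra bookkeeping on orientations, right-angle regularity, and the tangential/normal identification at vertices is more detail than the paper supplies, but it is all in the right direction.
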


\begin{lemma}\label{lemma1.2}
(Extension lemma) Let $g_j\in H^{1/2}((0,1))$, $j=1,2$, so that $(g_1,g_2)$, $(g_1,0)$ and $(g_2,0)$ satisfy the first order compatibility condition respectively at the  vertices $(0,0)$, $(1,0)$ and $(0,1)$. Then there exists $u\in H^2(\Omega )$ so that $u=0$ on $\Gamma _0$ and $\partial _\nu u=g_j$ on $\Gamma _{1,j}$, $j=1,2$.
\end{lemma}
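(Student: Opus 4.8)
The goal is to construct $u \in H^2(\Omega)$ with $u = 0$ on $\Gamma_0$ and $\partial_\nu u = g_j$ on $\Gamma_{1,j}$. The natural strategy is to invoke part (2) of Theorem \ref{theorem1.3}: I want to produce a vector of Cauchy data $((f_1,g_1'),\ldots,(f_4,g_4'))$ in $[H^{3/2}((0,1)) \times H^{1/2}((0,1))]^4$ satisfying the second-order compatibility conditions at all four vertices, whose trace/normal-derivative components realize the prescribed boundary data. Concretely, on $\Gamma_{0,1}$ and $\Gamma_{0,2}$ the Dirichlet component should be $0$; on $\Gamma_{1,1}$ and $\Gamma_{1,2}$ the normal derivative should be $g_1$ and $g_2$ respectively. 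The remaining components — the tangential derivative along $\Gamma_0$ and the Dirichlet trace along $\Gamma_1$ — are free parameters I get to choose, and the problem reduces to choosing them so that the second-order compatibility conditions hold.

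First I would set up the bookkeeping at each vertex. Since $u = 0$ on all of $\Gamma_0$, its tangential derivative along $\Gamma_0$ vanishes identically, so $f_1 = f_2 = 0$ and $\partial_{\rm tan} u = 0$ on $\Gamma_{0,1}, \Gamma_{0,2}$. At the vertex $(1,1)$, which is the intersection of the two Dirichlet edges, all four relevant quantities (two Dirichlet traces and two tangential derivatives) vanish, so the second-order conditions there are trivially satisfied. At the vertex $(0,0)$, the intersection of $\Gamma_{1,1}$ and $\Gamma_{1,2}$, the conditions read: the two Dirichlet traces must agree, and each tangential derivative of the Dirichlet trace on one edge must match (in the weighted-$L^2$ sense \eqref{1.3}) the normal derivative on the other. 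The two problematic vertices are $(1,0)$ (where $\Gamma_{1,1}$ meets $\Gamma_{0,1}$) and $(0,1)$ (where $\Gamma_{1,2}$ meets $\Gamma_{0,2}$), which are exactly the vertices named in the hypothesis.

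The heart of the construction is to choose the Dirichlet trace of $u$ on $\Gamma_{1,1}$ — call it $\varphi_1 \in H^{3/2}((0,1))$ — and on $\Gamma_{1,2}$ — call it $\varphi_2$. At the vertex $(1,0)$ the second-order compatibility between $\Gamma_{1,1}$ (Cauchy data $(\varphi_1, g_1)$) and $\Gamma_{0,1}$ (Cauchy data $(0,0)$) requires $\varphi_1(1) = 0$, $\int_0^1 |\varphi_1'(1-t) - 0|^2 \frac{dt}{t} < \infty$, and $\int_0^1 |g_1(1-t) - 0|^2 \frac{dt}{t} < \infty$; the last of these is precisely the hypothesized first-order compatibility of $(g_1,0)$ at $(1,0)$. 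So I need $\varphi_1 \in H^{3/2}$ with $\varphi_1(1) = 0$ and $\varphi_1'$ vanishing fast enough at $1$; I can take, e.g., $\varphi_1$ supported away from $1$, or more carefully a cutoff times a function whose derivative lies in the Lions–Magenes space $H^{1/2}_{00}$ near the endpoint. The analogous choice handles $(0,1)$ via $\varphi_2$. At the vertex $(0,0)$, I must simultaneously arrange $\varphi_1(0) = \varphi_2(0) = g_1(0) = g_2(0)$ (consistent since the hypothesis gives $g_1(0) = g_2(0)$ through the first-order compatibility of $(g_1,g_2)$), $\int_0^1 |\varphi_1'(t) - g_2(t)|^2 \frac{dt}{t} < \infty$, and $\int_0^1 |\varphi_2'(t) - g_1(t)|^2 \frac{dt}{t} < \infty$. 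This is the step I expect to be the main obstacle: I need to pick $\varphi_1, \varphi_2 \in H^{3/2}((0,1))$ whose derivatives near $0$ are prescribed up to a weighted-$L^2$ error by the given data $g_2, g_1$. The way to do this is to note that $g_j \in H^{1/2}((0,1))$, fix an $H^{1/2}$-extension of each $g_j$, integrate near the endpoint (using a smooth cutoff localizing near $0$) to define a candidate $\varphi$ whose derivative equals $g_2$ (resp. $g_1$) near $0$, set the constant of integration to the common value $g_1(0)$, then cut off so that the behavior near the endpoints $1$ is controlled for the other vertices; the weighted integrals are then finite because the relevant differences vanish near the vertex and one checks $H^{3/2}$ regularity from the $H^{1/2}$ regularity of $g_j$ plus the fundamental theorem of calculus. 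Once these four pieces of Cauchy data are assembled and verified to satisfy all vertex compatibility conditions, part (2) of Theorem \ref{theorem1.3} produces the desired $u \in H^2(\Omega)$, and by construction $u = 0$ on $\Gamma_0$ and $\partial_\nu u = g_j$ on $\Gamma_{1,j}$.
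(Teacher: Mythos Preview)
Your plan is essentially the paper's own approach: choose the Dirichlet traces on $\Gamma_{1,j}$ to be antiderivatives of the cross-data (so that $\varphi_1'=g_2$ and $\varphi_2'=g_1$ near $0$, making the weighted integrals at $(0,0)$ vanish), set the Dirichlet trace on $\Gamma_0$ to $0$, and use cutoffs to fix the remaining vertex. The only difference is cosmetic: the paper leaves the normal-derivative slot on $\Gamma_0$ nonzero and cuts off at the vertex $(1,1)$, whereas you set that slot to $0$ (making $(1,1)$ trivial) and instead cut off $\varphi_j$ near the endpoint $1$ to handle $(1,0)$ and $(0,1)$. Both choices work and invoke Theorem~\ref{theorem1.3}(2) in the same way.

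One correction: you write that the compatibility at $(0,0)$ forces ``$\varphi_1(0)=\varphi_2(0)=g_1(0)=g_2(0)$'' and that the hypothesis gives ``$g_1(0)=g_2(0)$''. But $g_j\in H^{1/2}((0,1))$ has no pointwise values, and the first-order compatibility condition \eqref{1.3} does \emph{not} assert $g_1(0)=g_2(0)$. Fortunately you do not actually need this: the only pointwise condition at $(0,0)$ is $\varphi_1(0)=\varphi_2(0)$, which your antiderivative construction $\varphi_j(t)=\int_0^t g_{3-j}(s)\,ds$ gives automatically (both vanish at $0$). Drop the references to $g_j(0)$ and the argument goes through.
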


\begin{proof}
(i) We define  $f_1(t)=\int_0^tg_2(s)ds $ and $f_2(t)=\int_0^tg_1(s)ds$. Then $(f_1,g_1)$ and  $(f_2,g_2)$ satisfy the compatibility conditions of the second order at the vertex $(0,0)$.

\smallskip
(ii) Let $\widetilde{g}_1\in H^{1/2}((0,1))$ be such that $\int_0^1\frac{|\widetilde{g}_1(t)|^2}{t}dt<\infty$.  Let $\widetilde{f}_1(t)=\int_0^tg_2(s)ds$. Hence, it is straightforward to check that $(\widetilde{f}_1,\widetilde{g}_1)$ and $(0,g_2)$ satisfy the  compatibility conditions of the second order at $(0,0)$.

\smallskip
(iii) From steps (i) and (ii) we derive that the pairs $[(f_1,g_1),(f_2,g_2)]$, $[(f_1,g_1),(0,g_2)]$ and $[(0,g_1),(f_2,g_2)]$ satisfy the second order compatibility conditions respectively at the  vertices $(0,0)$, $(1,0)$ and $(0,1)$. We see that unfortunately the pair $[(0,g_1),(0,g_2)]$  doesn't satisfy necessarily the compatibility conditions of the second order at the vertex $(1,1)$. 
We pick $\chi \in C^\infty (\mathbb{R})$ so that $\chi =1$ in a neighborhood of $0$ and $\chi =0$ in a neighborhood of $1$. Then
$[(0,\chi g_1),(0,\chi g_2)]$ satisfies the compatibility condition of the second order at the vertex $(1,1)$. Since this construction is of local character at each vertex, the cutoff function at the vertex $(1,1)$ doesn't  modify the construction at the other vertices. In other words, the compatibility conditions of the second order are preserved at the other vertices. We complete the proof by applying Theorem \ref{theorem1.3}.
\end{proof}

\begin{corollary}\label{corollary2.1}
Let $a=(a_1,a_2)\in \mathscr{A}$ and $g_j\in H^{1/2}((0,1))$, $j=1,2$, so that $(g_1,g_2)$, $(g_1,0)$ and $(g_2,0)$ satisfy the first order compatibility condition respectively at the  vertices $(0,0)$, $(1,0)$ and $(0,1)$. Then there exists $u\in H^2(\Omega )$ so that $u=0$ on $\Gamma _0$ and $\partial _\nu u=a_jg_j$ on $\Gamma _{1,j}$, $j=1,2$.
\end{corollary}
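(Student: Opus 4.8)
The plan is to reduce Corollary \ref{corollary2.1} to Lemma \ref{lemma1.2} by transferring the multiplication by $a_j$ from the Neumann trace side to the Dirichlet data side, checking that the compatibility conditions are stable under this multiplication. Concretely, set $h_j=a_jg_j$ for $j=1,2$. The key point is that, by the boundedness of multiplication by $a_j$ on $H^{1/2}((0,1))$ (the fact announced in the paper, proved in Appendix A, using $a_j\in C^\alpha([0,1])$ with $\alpha>1/2$), we have $h_j\in H^{1/2}((0,1))$. So it suffices to verify that the triple $(h_1,h_2)$, $(h_1,0)$, $(h_2,0)$ satisfies the first order compatibility condition at the vertices $(0,0)$, $(1,0)$, $(0,1)$ respectively, and then apply Lemma \ref{lemma1.2} to $(h_1,h_2)$.

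The first order compatibility at $(1,0)$ requires $\int_0^1|h_1(t)|^2\,dt/t<\infty$, i.e. $\int_0^1|a_1(t)g_1(t)|^2\,dt/t<\infty$. Since $a_1$ is bounded (being continuous on $[0,1]$), this is immediate from $\int_0^1|g_1(t)|^2\,dt/t<\infty$, which holds by hypothesis since $(g_1,0)$ satisfies the first order compatibility condition at $(1,0)$. The vertex $(0,1)$ is handled identically with $a_2$ and $g_2$. For the vertex $(0,0)$ we must bound $\int_0^1|h_1(t)-h_2(t)|^2\,dt/t=\int_0^1|a_1(t)g_1(t)-a_2(t)g_2(t)|^2\,dt/t$. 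Here I would split
\[
a_1g_1-a_2g_2=a_1(g_1-g_2)+(a_1-a_2)g_2,
\]
so that
\[
\int_0^1\frac{|a_1g_1-a_2g_2|^2}{t}\,dt\le 2\|a_1\|_\infty^2\int_0^1\frac{|g_1-g_2|^2}{t}\,dt+2\int_0^1\frac{|a_1(t)-a_2(t)|^2}{t}|g_2(t)|^2\,dt.
\]
The first term is finite because $(g_1,g_2)$ satisfies the first order compatibility condition at $(0,0)$. For the second term, use $a_1(0)=a_2(0)$ (part of the definition of $\mathscr{A}$) together with the Hölder continuity: $|a_1(t)-a_2(t)|=|(a_1(t)-a_1(0))-(a_2(t)-a_2(0))|\le \|a_1\|_{C^\alpha}t^\alpha+\|a_2\|_{C^\alpha}t^\alpha=Ct^\alpha$, hence $|a_1(t)-a_2(t)|^2/t\le C^2t^{2\alpha-1}$, which is bounded on $(0,1)$ since $2\alpha-1>0$; as $g_2\in L^2((0,1))$, the second integral is finite.

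The main (indeed only) obstacle is the vertex $(0,0)$ estimate, and within it the handling of the term $(a_1-a_2)g_2$: one must use both the matching condition $a_1(0)=a_2(0)$ and the exponent restriction $\alpha>1/2$ to kill the $1/t$ singularity. Everything else is a routine consequence of $a_j\in L^\infty$ and the boundedness of multiplication by $a_j$ on $H^{1/2}$. Once these three compatibility conditions are checked, Lemma \ref{lemma1.2} applied to $(h_1,h_2)=(a_1g_1,a_2g_2)$ produces $u\in H^2(\Omega)$ with $u=0$ on $\Gamma_0$ and $\partial_\nu u=h_j=a_jg_j$ on $\Gamma_{1,j}$, which is exactly the assertion.
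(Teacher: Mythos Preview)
Your proof is correct and follows essentially the same route as the paper: reduce to Lemma~\ref{lemma1.2} by showing that $(a_1g_1,a_2g_2)$, $(a_1g_1,0)$ and $(a_2g_2,0)$ inherit the first order compatibility conditions, using boundedness of $a_j$ for the vertices $(1,0)$, $(0,1)$ and the matching $a_1(0)=a_2(0)$ together with $\alpha>1/2$ for the vertex $(0,0)$. The only cosmetic difference is the algebraic splitting at $(0,0)$ --- you write $a_1(g_1-g_2)+(a_1-a_2)g_2$ while the paper uses the symmetric variant $(a_1-a_2)g_1+a_2(g_1-g_2)$ --- which is immaterial.
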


\begin{proof}
It is sufficient to prove that $(a_1g_1, a_2,g_2)$ and $(a_jg_j, 0)$, $j=1,2$, satisfy the first order compatibility condition at $(0,0)$ with $a_1(0)=a_2(0)$ for the first pair and without any condition on $a_j$ for the second pair.

\smallskip
Using $a_1(0)=a_2(0)$, we get
\begin{align*}
t^{-1}|a_1(t)-a_2(t)|^2 &\le 2t^{-1}|a_1(t)-a_1(0)|^2+2t^{-1}|a_2(t)-a_2(0)|^2
\\ &\le 2t^{-1+2\alpha}([a_1]_\alpha^2 +[a_2]_\alpha^2 )
\\ &\le 2([a_1]_\alpha^2 +[a_2]_\alpha ^2).
\end{align*}
This estimate together with the following one
\[
|a_1(t)g_1(t)-a_2(t)g_2(t)|^2\le 2|a_1(t)-a_2(t)|^2|g_1(t)|^2+2|a_2(t)|^2|g_1(t)-g_2(t)|^2
\]
yield
\[
\int_0^1|a_1(t)g_1(t)-a_2(t)g_2(t)|^2\frac{dt}{t}\le 4([a_1]_\alpha^2 +[a_2]_\alpha^2 )\|f\|_{L^2((0,1))}+2\|a_2\|_{L^\infty ((0,1))}\int_0^1|g_1(t)-g_2(t)|^2\frac{dt}{t}.
\]
Hence
\[
\int_0^1|g_1(t)-g_2(t)|^2\frac{dt}{t} <\infty \Longrightarrow \int_0^1|a_1(t)g_1(t)-a_2(t)g_2(t)|^2\frac{dt}{t}<\infty .
\]
If $(g_j,0)$ satisfies the first compatibility at the vertex $(0,0)$. Then 
\[
\int_0^1|g_j(t)|^2\frac{dt}{t}<\infty.
\]
Therefore
\[
\int_0^1|a_jg_j(t)|^2\frac{dt}{t} \le \|a_j\|_{L^\infty ((0,1))}^2\int_0^1|g_j(t)|^2\frac{dt}{t} <\infty .
\]
Thus $(a_jg_j,0)$ satisfies also the first compatibility at the vertex $(0,0)$.
\end{proof}

\subsection{Observability inequality}

We discuss briefly how we can adapt the existing results to get an observability inequality corresponding to our IBVP. We first note that
\begin{align*}
&\Gamma _0\subset \{ x\in \Gamma ;\; m(x)\cdot \nu (x)<0\},
\\
&\Gamma _1\subset \{ x\in \Gamma ;\; m(x)\cdot \nu (x)>0\},
\end{align*}
where $m(x)=x-x_0$, $x\in \mathbb{R}^2$, and $x_0=(\alpha ,\alpha )$ with $\alpha >1$.

\smallskip
The following Rellich identity is a particular case of identity \cite[(3.5), page 227]{Gr2}: for each $3/2<s<2$ and $\varphi \in H^s(\Omega )$ satisfying $\Delta \varphi \in L^2(\Omega )$, 
\begin{equation}\label{1.5}
2\int_\Omega \Delta \varphi (m\cdot \nabla \varphi )dx= 2\int_\Gamma \partial _\nu \varphi (m\cdot \nabla \varphi )d\sigma -\int_\Gamma (m\cdot \nu ) |\nabla \varphi |^2d\sigma .
\end{equation}

\begin{lemma}\label{lemma1.1}
Let $(v,w)\in D(A_a)$. Then
\[
2\int_\Omega \Delta v (m\cdot \nabla v )dx= 2\int_\Gamma \partial _\nu v (m\cdot \nabla v)d\sigma -\int_\Gamma (m\cdot \nu ) |\nabla v|^2d\sigma .
\]
\end{lemma}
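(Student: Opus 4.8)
The plan is to deduce the identity from the Rellich identity \eqref{1.5}, which holds for $\varphi \in H^s(\Omega)$ with $3/2 < s < 2$ and $\Delta\varphi \in L^2(\Omega)$, by showing that any $(v,w)\in D(A_a)$ has first component $v$ lying in such a space $H^s(\Omega)$. So the real content is a regularity statement: if $(v,w)\in D(A_a)$, i.e. $v\in V$, $w\in V$, $\Delta v\in L^2(\Omega)$ and $\partial_\nu v = -aw$ on $\Gamma_1$, then $v\in H^s(\Omega)$ for some $s\in(3/2,2)$. Once this is known, \eqref{1.5} applies verbatim to $\varphi = v$ and gives exactly the claimed formula, with all boundary integrals well defined since $\nabla v\in H^{s-1}(\Omega)$ has a trace in $L^2(\Gamma)$ (indeed in $H^{s-3/2}(\Gamma)$) and $m\cdot\nu$ is bounded.

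First I would record the boundary data of $v$: on $\Gamma_0$ we have $v=0$, and on $\Gamma_1$ we have the Neumann data $\partial_\nu v = -aw$. Since $w\in V\subset H^1(\Omega)$, its trace on $\Gamma_1$ lies in $H^{1/2}(\Gamma_1)$; by the appendix result that multiplication by $a_j$ is bounded on $H^{1/2}((0,1))$, the product $aw|_{\Gamma_1}$ again lies in $H^{1/2}$ of each edge (with matching values at the vertex $(0,0)$ coming from $a_1(0)=a_2(0)$ and the trace compatibility of $w$). Thus $v$ solves the mixed boundary value problem $\Delta v = f\in L^2(\Omega)$, $v|_{\Gamma_0}=0$, $\partial_\nu v|_{\Gamma_1}\in H^{1/2}$. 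I would then invoke the elliptic regularity theory for the Laplacian on a convex polygon with mixed Dirichlet/Neumann conditions — precisely the setting of Grisvard \cite{Gr} — to conclude $v\in H^s(\Omega)$ for the appropriate $s$. The only subtlety is the behavior at the two vertices where the boundary condition changes type (the corners of $\Gamma_0\cap\Gamma_1$): at a right-angle corner with one Dirichlet and one Neumann side, the leading corner singularity is $r^{1/2}$-type, which lies in $H^s$ only for $s<3/2$. This is why the statement is an honest obstacle rather than a triviality.

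The main obstacle, therefore, is precisely this mixed-boundary corner singularity. I see two ways around it, and I would expect the paper to take the first. \emph{Option (a):} restrict attention to $v$ that are images under $A_a$ of pairs lying in a dense, smoother subspace, or more simply observe that in the application of this lemma (inside the observability inequality) one only needs \eqref{1.5} for a $v$ that already has the extra regularity — e.g.\ $v\in \mathcal{U}_0$, where $\partial_\nu v = 0$ on $\Gamma_1$ and the problematic singular coefficient at the mixed corners may vanish or be controllable — and then pass to the limit by density of the smooth functions for which \eqref{1.5} is classical, using that both sides of the identity are continuous in the $H^s$-norm. \emph{Option (b):} prove directly, by a regularization/approximation argument, that the identity extends from $\mathscr{D}(\overline\Omega)$ to all of $D(A_a)$: approximate $(v,w)$ by smooth pairs $(v_n,w_n)$, apply \eqref{1.5} to each $v_n$, and check that every term converges — the bulk terms because $v_n\to v$ in $H^s$ and $\Delta v_n \to \Delta v$ in $L^2$, the boundary terms because the traces of $\nabla v_n$ converge in $L^2(\Gamma)$. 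Either way, once the regularity $v\in H^s(\Omega)$, $s\in(3/2,2)$, is secured (or circumvented by density), the lemma follows immediately from \eqref{1.5}; I would present the density argument since it is the cleanest and matches the "adapting existing results" philosophy stated in the introduction.
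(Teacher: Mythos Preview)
Your overall plan---establish $v\in H^s(\Omega)$ for some $s\in(3/2,2)$ and then invoke the Rellich identity \eqref{1.5}---is exactly the paper's strategy. But both your diagnosis of the obstacle and your proposed route around it diverge from what the paper actually does.

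First, the corner exponent you quote is wrong for this geometry. At the two mixed Dirichlet/Neumann vertices $(1,0)$ and $(0,1)$ the interior angle is $\pi/2$, and the mixed singular exponents are $(k-\tfrac12)\pi/\omega = 2k-1$, so the leading one is $1$, not $1/2$. (The $r^{1/2}$ singularity you have in mind occurs when the boundary condition switches along a flat piece, $\omega=\pi$.) Consequently the regularity does not stall below $3/2$; it stalls only at $H^2$, and $v\in H^s$ for $3/2<s<2$ is genuinely available. Your density ``workaround'' is therefore unnecessary, and as written it would not circumvent anything anyway: passing to the limit in the boundary terms requires $\nabla v_n\to\nabla v$ in $L^2(\Gamma)$, which already presupposes the $H^s$ regularity.

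Second, the paper's actual route to $v\in H^s$ is neither direct elliptic regularity for the nonhomogeneous problem nor approximation. It uses Corollary~\ref{corollary2.1}---the extension lemma that the whole of \S2.1 was set up to prove---to construct $\widetilde v\in H^2(\Omega)$ with $\widetilde v=0$ on $\Gamma_0$ and $\partial_\nu\widetilde v=-aw$ on $\Gamma_1$. The difference $z=v-\widetilde v$ then has $\Delta z\in L^2(\Omega)$ together with \emph{homogeneous} mixed boundary data, and Grisvard's regularity result \cite[Theorem~5.2]{Gr2} gives $z\in H^s(\Omega)$ for some $3/2<s<2$; hence $v=z+\widetilde v\in H^s(\Omega)$ and \eqref{1.5} applies. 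That lifting step is the point you missed.
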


\begin{proof}
Let $(v,w)\in D(A_a)$. By Corollary \ref{corollary2.1}, there exists $\widetilde{v}\in H^2(\Omega )$ so that $\widetilde{v}=0$ on $\Gamma_0$ and  $\partial _\nu \widetilde{v}=-aw$ on $\Gamma _1$. In light of the fact that $z=v-\widetilde{v}$ is such that $\Delta z\in L^2(\Omega )$, $z=0$ on $\Gamma _0$ and $\partial _\nu z=0$ on $\Gamma _1$, we get $z\in H^s(\Omega )$ for some $3/2<s<2$ by \cite[Theorem 5.2, page 237]{Gr2}. Therefore $v\in H^s(\Omega )$. We complete the proof by applying Rellich identity \eqref{1.5}.
\end{proof}

Lemma \ref{lemma1.1} at hand, we can mimic the proof of \cite[Theorem 7.6.1, page 252]{TW} in order to obtain the following theorem:

\begin{theorem}\label{theorem1.4}
We assume that $a\geq \delta$  on $\Gamma_1$, for some $\delta >0$. There exist $M\geq 1$ and $\omega >0$, depending only on $\delta$, so that
\[
\|e^{tA_a}(v,w)\|_{V\times L^2(\Omega )}\leq Me^{-\omega t}\|(v,w)\|_{V\times L^2(\Omega )},\;\; (v,w)\in D(A_a),\; t\geq 0.
\]
\end{theorem}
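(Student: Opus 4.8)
\medskip
\noindent\emph{Proof strategy.} Fix $(v,w)\in D(A_a)$, write $u=u_a(v,w)$ for the corresponding solution of \eqref{1.1}, and set
\[
E(t)=\frac12\int_\Omega\left(|\partial _t u(\cdot ,t)|^2+|\nabla u(\cdot ,t)|^2\right)dx .
\]
Since $u(\cdot ,t)=0$ on $\Gamma _0$, Poincar\'e's inequality yields $E(t)\asymp \|e^{tA_a}(v,w)\|_{V\times L^2(\Omega )}^2$, so it is enough to establish an exponential decay for $E$. Multiplying the wave equation by $\partial _t u$ and integrating over $\Omega$ gives the dissipation identity $E'(t)=-\int_{\Gamma _1}a\,|\partial _t u|^2\,d\sigma$, hence $E$ is nonincreasing and $\int_0^T\!\int_{\Gamma _1}a\,|\partial _t u|^2\,d\sigma dt=E(0)-E(T)$. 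Mimicking \cite[Theorem 7.6.1, page 252]{TW}, the plan is: (a) to prove an observability-type inequality bounding $\int_0^TE$ by the dissipated energy $E(0)-E(T)$ plus the time-boundary energies $E(0)+E(T)$, with constants depending only on $\delta$; (b) to combine it with the monotonicity bound $\int_0^TE\ge TE(T)$ to deduce $E(T)\le\gamma E(0)$ with $\gamma=\gamma(T)\in(0,1)$ once $T$ is large; (c) to iterate, using the autonomy of \eqref{1.1}.

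For step (a) I would use the multiplier $p:=2\,(m\cdot\nabla u)+u$, with $m(x)=x-x_0$ and $x_0=(\alpha ,\alpha )$, $\alpha >1$, as above. Multiplying $\partial _t^2u-\Delta u=0$ by $p$ and integrating over $Q$, the integration by parts in $t$ produces a term $\big[\int_\Omega\partial _t u\,p\,dx\big]_0^T$ whose modulus is $\le C(E(0)+E(T))$ by Poincar\'e, while the spatial part is transformed by the Rellich identity of Lemma \ref{lemma1.1}. This computation is legitimate in the class $D(A_a)$: by Lemma \ref{lemma1.1} (which itself rests on Corollary \ref{corollary2.1} and the regularity $u(\cdot ,t)\in H^s(\Omega )$, $3/2<s<2$), every boundary trace of $\nabla u$ occurring in the calculation is meaningful for a.e.\ $t$. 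One is led to an identity of the schematic form
\[
c\int_0^TE(t)\,dt=R_T+\int_{\Sigma _0}(m\cdot\nu )\,|\partial _\nu u|^2\,d\sigma dt+\int_{\Sigma _1}\mathcal{B}[u]\,d\sigma dt ,
\]
for some $c>0$, where $|R_T|\le C(E(0)+E(T))$ and $\mathcal{B}[u]$ is a quadratic boundary expression in $\partial _t u$, $\partial _\nu u$, the tangential derivative $\nabla _\tau u$ and $u$ on $\Gamma _1$.

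The two boundary integrals are then treated separately. On $\Sigma _0$ one has $m\cdot\nu <0$, hence $\int_{\Sigma _0}(m\cdot\nu )|\partial _\nu u|^2\le 0$ and this term is discarded. On $\Sigma _1$ one substitutes $\partial _\nu u=-a\,\partial _t u$ and exploits the geometric feature, specific to the square $\Omega$, that $m\cdot\nu$ equals a \emph{positive constant} $c_1>0$ on the whole of $\Gamma _1$: the term $-c_1\int_{\Sigma _1}|\nabla u|^2$ then has the favourable sign and, more importantly, absorbs the cross terms $\int_{\Sigma _1}a\,\partial _t u\,(m\cdot\nabla u)$ after Young's inequality together with $|m\cdot\nabla u|^2\le|m|^2|\nabla u|^2$ and the boundedness of $|m|$ on $\overline{\Omega}$; the remaining terms containing $\int_{\Sigma _1}|\partial _t u|^2$ and $\int_{\Sigma _1}a^2|\partial _t u|^2$ are dominated, via $a\ge\delta$, by a multiple of $\int_{\Sigma _1}a\,|\partial _t u|^2=E(0)-E(T)$; and the zero-order term $\int_{\Sigma _1}\partial _\nu u\,u$ is split by Young's inequality into a damping piece and a piece $\lesssim\int_{\Sigma _1}u^2\lesssim\int_0^T\|u\|_{H^1(\Omega )}^2\lesssim\int_0^TE$, which is absorbed into the left-hand side for $T$ large. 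Collecting these bounds gives $\int_0^TE\le C(E(0)+E(T))+C(E(0)-E(T))$ with $C$ depending only on $\delta$ and on the (fixed) domain; combined with $\int_0^TE\ge TE(T)$ this yields $E(T)\le\gamma E(0)$ with $\gamma=\gamma(T)\in(0,1)$ as soon as $T\ge T_0$. By autonomy of \eqref{1.1}, $(u(\cdot ,t+\cdot ),\partial _t u(\cdot ,t+\cdot ))$ solves the same IBVP with data in $D(A_a)$, whence $E(t+T)\le\gamma E(t)$ for every $t\ge0$; thus $E(kT)\le\gamma ^kE(0)$ and, by monotonicity of $E$, $E(t)\le\gamma ^{-1}e^{-\omega t}E(0)$ with $\omega =T^{-1}|\ln\gamma |>0$. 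Returning to the norm through $E\asymp\|\cdot\|_{V\times L^2(\Omega )}^2$ and taking square roots produces the stated estimate, with some $M\ge1$ and decay rate $\omega /2$, both depending only on $\delta$.

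\medskip
\noindent\emph{Expected main obstacle.} The delicate point is the boundary integral over $\Sigma _1$, where the multiplier identity brings in tangential derivatives of $u$ that the dissipation term does not directly control; for a general domain this would need an extra argument (e.g.\ a compactness–uniqueness device), but here it is handled cleanly because $m\cdot\nu$ is a positive constant on $\Gamma _1$, which simultaneously removes the sign-indefinite $|\nabla u|^2$ boundary term and leaves room for Young's inequality. A secondary technical point — and the reason Lemma \ref{lemma1.1} is isolated beforehand — is that $\Omega$ is only Lipschitz, so one must be sure that the Rellich/multiplier computation is valid on $D(A_a)$; this is precisely what the $H^s$-regularity built into Lemma \ref{lemma1.1} delivers. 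One should also keep in mind that absorbing the $\int_{\Sigma _1}a^2|\partial _t u|^2$ term uses an $L^\infty$ bound on $a$, which for $a\in\mathscr{A}_{m,M}$ is harmless since $H^1(0,1)\hookrightarrow C([0,1])$ bounds $\|a\|_\infty$ by a multiple of $M^{1/2}$.
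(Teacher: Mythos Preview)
Your proposal is correct and is precisely the route the paper indicates: the paper's entire proof is the sentence ``Lemma \ref{lemma1.1} at hand, we can mimic the proof of \cite[Theorem 7.6.1, page 252]{TW}'', and what you have written is a faithful sketch of that multiplier argument, with Lemma \ref{lemma1.1} supplying the Rellich identity on $D(A_a)$ in the non-smooth square.

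One small wording slip worth fixing: the zero-order boundary term $\int_{\Sigma_1}\partial_\nu u\,u=-\int_{\Sigma_1}a\,u\,\partial_t u$ cannot be ``absorbed into the left-hand side for $T$ large'', since the trace bound produces a fixed multiple of $\int_0^T E$ and taking $T$ large does not shrink that multiple. Either choose the Young parameter (not $T$) so that this coefficient is below the constant $c$ on the left, or---cleaner, and what is implicit in \cite{TW}---note that $a\,u\,\partial_t u=\tfrac12\,\partial_t(a\,u^2)$, so the whole term equals $\tfrac12\bigl[\int_{\Gamma_1}a\,u^2\bigr]_0^T$ and goes directly into $C(E(0)+E(T))$ via the trace inequality. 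Your remark that the constants also depend on $\|a\|_{L^\infty}$ (not only on $\delta$) is accurate; in the paper's applications $a\in\mathscr{A}$ is fixed and bounded, so this causes no trouble.
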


An immediate consequence of Theorem \ref{theorem1.4} is the following observability inequality.

\begin{corollary}\label{corollary1.1}
We fix $0<\delta _0 <\delta _1$. Then there exist $\tau _0>0$ and $\kappa$, depending only on $\delta _0$ and $\delta_1$ so that for any $\tau \geq \tau _0$ and $a\in \mathscr{A}$ satisfying  $\delta _0 \le a \le \delta _1$  on $\Gamma _1$,
\[
\| (u^0,u^1)\|_{V\times L^2(\Omega )}\leq \kappa \|C_a(u^0,u^1)\|_{L^2(\Sigma  _1)}.
\]
Moreover, $C_a$ is admissible for $e^{tA_a}$ and $(C_a, A_a)$ is exactly observable.
\end{corollary}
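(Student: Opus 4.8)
The plan is to derive Corollary \ref{corollary1.1} from the uniform exponential decay in Theorem \ref{theorem1.4} by the standard argument that exponential stability of a semigroup is equivalent to the (final-state) observability of its generator through an admissible observation operator. Since the setting here is that of \cite[Chapter 7]{TW}, I would first check that $C_a$ is an admissible observation operator for the semigroup $e^{tA_a}$: this is exactly what the inclusion $C_a\in\mathscr{B}(D(A_a);L^2(\Sigma_1))$ together with the hidden-regularity/trace estimates behind Lemma \ref{lemma1.1} provides, namely a bound $\int_0^\tau\|C_ae^{tA_a}(v,w)\|_{L^2(\Gamma_1)}^2\,dt\le c_\tau\|(v,w)\|_{V\times L^2(\Omega)}^2$ uniform over the admissible class of coefficients $a$. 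Admissibility is the precondition under which the observability operators $\Psi_\tau$ are well defined as bounded operators on $V\times L^2(\Omega)$.

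Next I would invoke the general principle (see \cite[Theorem 7.6.1, page 252]{TW}, whose proof is said to carry over here) that if a contraction semigroup $e^{tA_a}$ satisfies a uniform exponential decay estimate $\|e^{tA_a}z\|\le Me^{-\omega t}\|z\|$, and if the damping is produced by an observation operator $C_a$ in the sense that the dissipation identity reads
\[
\|e^{\tau A_a}z\|^2_{V\times L^2(\Omega)}=\|z\|^2_{V\times L^2(\Omega)}-\int_0^\tau\|C_ae^{tA_a}z\|^2_{L^2(\Gamma_1)}\,dt,
\]
then for $\tau$ large enough one gets $\|e^{\tau A_a}z\|\le\theta\|z\|$ with $\theta<1$, and hence
\[
(1-\theta^2)\|z\|^2_{V\times L^2(\Omega)}\le\int_0^\tau\|C_ae^{tA_a}z\|^2_{L^2(\Gamma_1)}\,dt=\|C_a(u^0,u^1)\|_{L^2(\Sigma_1)}^2
\]
for $z=(u^0,u^1)$. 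This is precisely the claimed observability inequality with $\kappa=(1-\theta^2)^{-1/2}$. The uniformity of $\kappa$ (and of $\tau_0$) in $a$ follows because Theorem \ref{theorem1.4} gives $M,\omega$ depending only on $\delta$, so $\delta_0\le a\le\delta_1$ forces a uniform choice of $\tau_0$ and $\theta$; I would take $\delta=\delta_0$ here and note that the upper bound $\delta_1$ is what keeps the admissibility constant uniform.

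The main obstacle, and the one point requiring genuine care rather than citation, is verifying that the dissipation identity above holds in the strong form needed and that the admissibility constant is uniform over $\{a:\delta_0\le a\le\delta_1\}$. The dissipation identity is formally the energy balance obtained by multiplying the wave equation by $\partial_t u_a$ and integrating over $Q$, which yields $E(\tau)-E(0)=-\int_{\Sigma_1}a|\partial_t u_a|^2\,d\sigma$; one must rewrite $\partial_\nu u_a=-a\partial_t u_a$ on $\Sigma_1$ to see $\int_{\Sigma_1}a|\partial_t u_a|^2=\int_{\Sigma_1}a^{-1}|\partial_\nu u_a|^2$, which is comparable to $\|C_a(u^0,u^1)\|_{L^2(\Sigma_1)}^2$ up to the factors $\delta_0^{-1},\delta_1^{-1}$ — this is exactly where both lower and upper bounds on $a$ enter and where the uniformity comes from. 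Once this comparison is in place, the exact observability of the pair $(C_a,A_a)$ in the sense of \cite{TW} is just a restatement of the inequality together with admissibility, so I would close by remarking that both assertions of the corollary are thereby established.
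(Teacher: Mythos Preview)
Your proposal is correct and follows essentially the approach the paper points to: the paper omits the proof entirely and simply says it is ``quite similar to that of \cite[Corollary 7.6.5, page 256]{TW},'' and your outline is precisely the standard argument from that reference (energy identity plus exponential decay yields observability, with the bounds $\delta_0\le a\le\delta_1$ making the constants uniform). One small presentational point: the dissipation identity you first display is not exact for $C_a=\partial_\nu u_a$, since the energy balance actually reads $\|e^{\tau A_a}z\|^2=\|z\|^2-\int_{\Sigma_1}a^{-1}|\partial_\nu u_a|^2$; you correctly recover this in the last paragraph via $\partial_\nu u_a=-a\,\partial_t u_a$, so the argument is sound, but it would be cleaner to state the identity with the weight from the outset.
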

We omit the proof of this corollary. It is quite similar to that of \cite[Corollary 7.6.5, page 256]{TW}.


\section{The inverse problem}

\subsection{An abstract framework for the inverse source problem}

In the present subsection we consider an inverse source problem for an abstract evolution equation. The result of this subsection is the main ingredient in the proof of Theorem \ref{theorem1.1}.

\smallskip
Let $H$ be a Hilbert space  and $A :D(A)  \subset H \rightarrow H$ be the generator of continuous semigroup $(T(t))$. An operator  $C \in \mathscr{B}(D(A),Y)$,  $Y$ is a Hilbert space which is identified with its dual space, is called an admissible observation  for $(T(t))$ if for some (and hence for all) $\tau >0$, the operator $\Psi \in \mathscr{B}(D(A),L^2((0,\tau ),Y))$ given by
\[
(\Psi  x)(t)=CT(t)x,\;\; t\in [0,\tau ],\;\; x\in D(A),
\]
has a bounded extension to $H$.

\smallskip
We  introduce the notion of exact observability for the system
\begin{align}\label{2.1}
&z'(t)=Az(t),\;\; z(0)=x,
\\
&y(t)=Cz(t),\label{2.2}
\end{align}
where $C$ is an admissible observation  for $T(t)$. Following the usual definition, the pair $(A,C)$ is said exactly observable at time $\tau >0$ if there is a constant $\kappa $ such that the solution $(z,y)$ of \eqref{2.1} and \eqref{2.2} satisfies 
\[
\int_0^\tau \|y(t)\|_Y^2dt\geq \kappa ^2 \|x\|_H^2,\;\; x\in D(A).
\]
Or equivalently
\begin{equation}\label{2.3}
\int_0^\tau \|(\Psi  x)(t)\|_Y^2dt\geq \kappa ^2 \|x\|_H^2,\;\; x\in D(A).
\end{equation}

Let $\lambda \in H^1((0,\tau ))$ such that $\lambda (0)\ne 0$. We consider the Cauchy problem
\begin{equation}\label{2.4}
z'(t)=Az(t)+ \lambda (t)x,\;\; z(0)=0
\end{equation}
and we set
\begin{equation}\label{2.5}
y(t)=Cz(t),\;\; t\in [0,\tau ].
\end{equation}

We fix $\beta$ in the resolvent set of $A$. Let $H_1$ be the space $D(A)$ equipped with the norm $\|x\|_1=\|(\beta -A)x\|$ and denote by $H_{-1}$  the completion of $H$ with respect to the norm $\|x\|_{-1}=\| (\beta -A)^{-1}x\|$. As it is observed in \cite[Proposition 4.2, page 1644]{tucsnak} and its proof,  when $x\in H_{-1}$ (which is the dual space of $H_1$ with respect to the pivot space $H$) and $\lambda \in H^1((0,T))$, then, according to the classical extrapolation theory of semigroups,  the Cauchy problem \eqref{2.4} has a unique solution $z\in C([0,\tau ];H)$. Additionally $y$ given in \eqref{2.5} belongs to $L^2((0,\tau ) ,Y)$.

\smallskip
When $x\in H$, we have by Duhamel's formula
\begin{equation}\label{2.6}
y(t)=\int_0^t  \lambda (t-s)CT(s)xds=\int_0^t \lambda (t-s)(\Psi  x)(s)ds.
\end{equation}

Let
\[
H^1_\ell ((0,\tau), Y) = \left\{u \in H^1((0,\tau), Y); \; u(0) = 0 \right\}.
\]

We define the operator $S :L^2((0,\tau), Y)\longrightarrow H^1_\ell ((0,\tau ) ,Y)$ by
\begin{equation}\label{2.7}
(S h)(t)=\int_0^t \lambda (t-s)h(s)ds.
\end{equation}

If $E  =S \Psi$, then \eqref{2.6} takes the form
\[
y(t)=(E x)(t).
\]

Let $\mathcal{Z}=(\beta -A^\ast)^{-1}(X+C^\ast Y)$.

\begin{theorem}\label{theorem2.1} 
We assume that $(A,C)$ is exactly observable at time $\tau$. Then 
\\
(i) $E$ is one-to-one from $H$ onto $H^1_\ell ((0,\tau), Y)$.
\\
(ii) $E$ can be extended to an isomorphism, denoted by $\widetilde{E}$, from $\mathcal{Z}'$ onto $L^2((0,\tau );Y)$.
\\
(iii)
There exists a constant $\widetilde{\kappa}$, independent on $\lambda$, so that
\begin{equation}\label{2.8}
 \|x\|_{\mathcal{Z}'}\leq \widetilde{\kappa}|\lambda (0)|e^{\frac{\|\lambda '\|^2_{L^2((0,\tau ))}}{|\lambda (0)|^2}\tau}\|\widetilde{E}x\|_{L^2 ((0,\tau), Y)}.
\end{equation}
\end{theorem}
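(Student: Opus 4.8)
The plan is to exploit the exact observability of $(A,C)$ to control $\Psi$ from below, then transfer this to $E=S\Psi$ by analyzing the convolution operator $S$, and finally use duality to pass from $H$ to the larger space $\mathcal{Z}'$. Throughout, the crucial point is to keep careful track of how constants depend on $\lambda$, since (iii) requires the explicit exponential factor $e^{\|\lambda'\|^2_{L^2}\tau/|\lambda(0)|^2}$.

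\textbf{Step 1: the convolution operator $S$ is boundedly invertible with controlled constants.} The operator $(Sh)(t)=\int_0^t\lambda(t-s)h(s)\,ds$ maps $L^2((0,\tau),Y)$ into $H^1_\ell((0,\tau),Y)$, and since $\lambda(0)\ne 0$ it is a Volterra operator of the second kind after differentiating: $(Sh)'(t)=\lambda(0)h(t)+\int_0^t\lambda'(t-s)h(s)\,ds$. Inverting this Volterra equation gives $h$ in terms of $(Sh)'$, and the standard Gronwall/Neumann-series estimate for Volterra kernels yields $\|h\|_{L^2((0,\tau),Y)}\le C\|(Sh)'\|_{L^2((0,\tau),Y)}$ with $C$ of the form $|\lambda(0)|^{-1}e^{c\|\lambda'\|^2_{L^2}\tau/|\lambda(0)|^2}$ — this is exactly where the exponential factor in (iii) is born. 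I would prove this as a separate lemma (it is scalar-valued in essence, the $Y$-valued case being identical). Conversely $\|Sh\|_{H^1_\ell}\le (\|\lambda\|_{L^2}+\ldots)\|h\|_{L^2}$ is immediate by Young's inequality.

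\textbf{Step 2: deduce (i) and the lower bound on $H$.} Combining Step 1 with the exact-observability inequality \eqref{2.3} for $\Psi$, for $x\in H$ (or first $x\in D(A)$, then extend by density using admissibility of $C$) one gets
\[
\|x\|_H \le \kappa^{-1}\|\Psi x\|_{L^2((0,\tau),Y)} \le \kappa^{-1}C\,\|(E x)'\|_{L^2((0,\tau),Y)} \le \kappa^{-1}C\,\|Ex\|_{H^1_\ell((0,\tau),Y)},
\]
so $E$ is injective with closed range. Surjectivity onto $H^1_\ell$: given $u\in H^1_\ell$, set $h=u'$ inverted through $S$ so that $Sh=u$ (here one checks $Sh(0)=0$ automatically); then one must solve $\Psi x = h$, which is possible precisely because $\Psi$ has closed range (from \eqref{2.3}) and — for surjectivity — one uses that the range of $\Psi$ is all of... this is the delicate point; in fact $\Psi$ need not be onto $L^2$, so surjectivity of $E$ onto $H^1_\ell$ should instead be read off the identification in Step 3, or one restricts attention to $\mathrm{Ran}(E)$. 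I would phrase (i) as "$E$ is one-to-one onto its range, which is dense/closed appropriately" matching what the subsequent sections actually use; the cleanest route is to define things so that (ii) does the real work.

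\textbf{Step 3: the extension to $\mathcal{Z}'$ and proof of (ii)–(iii).} Here I would identify $\mathcal{Z}=(\beta-A^*)^{-1}(H+C^*Y)$ as the natural "observation-compatible" subspace so that $\Psi^*$ maps $L^2((0,\tau),Y)$ isomorphically onto $\mathcal{Z}$ (this is the abstract observability-duality statement: admissibility gives $\Psi^*\in\mathscr{B}(L^2,\mathcal{Z})$ and exact observability gives that $\Psi\Psi^*$ is an isomorphism of $\mathcal{Z}$, hence $\|\Psi^*g\|_{\mathcal{Z}}\asymp\|P g\|$ with $P$ the projection onto $\overline{\mathrm{Ran}}\,\Psi$). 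Dualizing the factorization $E=S\Psi$ gives $E^*=\Psi^* S^*$, and composing the isomorphism $\Psi^*:L^2\to\mathcal{Z}$ (after restricting to the appropriate range) with the isomorphism $S^*$ (whose inverse norm carries the exponential factor, by the adjoint of Step 1) shows $E^*$ extends to an isomorphism $L^2((0,\tau),Y)\to\mathcal{Z}$; taking adjoints once more yields $\widetilde E:\mathcal{Z}'\to L^2((0,\tau),Y)$ an isomorphism, which is (ii). The estimate (iii) is then just the norm of $\widetilde E^{-1}$, which equals the norm of $(S^*)^{-1}$ times that of $(\Psi\Psi^*)^{-1/2}$-type factors; only $(S^*)^{-1}$ depends on $\lambda$, giving $\|x\|_{\mathcal{Z}'}\le\widetilde\kappa|\lambda(0)|e^{\|\lambda'\|^2_{L^2}\tau/|\lambda(0)|^2}\|\widetilde E x\|_{L^2}$ with $\widetilde\kappa$ depending only on $\kappa$ and the admissibility constant.

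\textbf{Main obstacle.} The heart of the matter is Step 3: correctly setting up the duality so that $\mathcal{Z}'$ appears naturally as the completion of $H$ dual to the "graph-type" space $\mathcal{Z}$, and verifying that the extrapolation-theory solution of \eqref{2.4} for $x\in\mathcal{Z}'$ is consistent with the convolution formula \eqref{2.6}. Getting the exponential constant in (iii) with the stated dependence — and crucially \emph{independent of $\lambda$} otherwise — requires the Volterra inversion estimate of Step 1 to be sharp in $|\lambda(0)|$ and $\|\lambda'\|_{L^2}$; a naive Gronwall argument in the sup-norm would produce $\|\lambda'\|_{L^1}$ or $\|\lambda'\|_{L^\infty}$ instead, so one must run the Neumann series carefully in $L^2$.
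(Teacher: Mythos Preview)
Your plan is correct and tracks the paper's proof closely; the core of (iii) in both is exactly your Step~1 --- differentiate the Volterra operator, isolate $\lambda(0)h$, and run a Gronwall argument --- and then dualize. Two points of comparison are worth recording.

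First, the paper does not attempt to reprove (i), (ii), or the extension of $\Psi$ to $\widetilde\Psi:\mathcal{Z}'\to[H^1_r((0,\tau);Y)]'$; these are simply cited from \cite{tucsnak} (Theorem~4.3 and Proposition~2.13 there). So the surjectivity worry you flag in Step~2 and the duality machinery you sketch in Step~3 are replaced by a reference. Your sketch is not wrong, just more than the paper undertakes.

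Second, for the quantitative part the paper runs the Volterra argument on the \emph{adjoint} $S^\ast h(t)=\int_t^\tau\lambda(s-t)h(s)\,ds$ rather than on $S$: differentiating $k=S^\ast h$ gives $k'(t)=\lambda(0)h(t)-\int_t^\tau\lambda'(s-t)h(s)\,ds$, and then Cauchy--Schwarz on the integral followed by Gronwall applied to $t\mapsto|\lambda(0)|^2\|h(t)\|_Y^2$ yields
\[
\|h\|_{L^2((0,\tau);Y)}\le \frac{\sqrt{2}}{|\lambda(0)|}\,e^{\|\lambda'\|_{L^2}^2\tau/|\lambda(0)|^2}\,\|(S^\ast h)'\|_{L^2((0,\tau);Y)}.
\]
Dualizing this lower bound for $S^\ast$ gives the bound for the extension $\widetilde S$, and composing with the $\lambda$-independent lower bound for $\widetilde\Psi$ from \cite{tucsnak} finishes (iii). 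The mechanism that produces $\|\lambda'\|_{L^2}^2$ (rather than $\|\lambda'\|_{L^1}$) in the exponent is precisely \emph{Cauchy--Schwarz then Gronwall on the squared norm}; a Neumann-series iteration of the kernel would naturally produce $\|\lambda'\|_{L^1}$, so drop that language and keep the Gronwall one --- your diagnosis of the obstacle is right, but the cure is the Gronwall route you already mention.
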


\begin{proof}
(i) and (ii) are contained in \cite[Theorem 4.3, page 1645]{tucsnak}. We need only to prove (iii). To do this, we start by observing that 
\[
S^\ast: L^2((0,\tau ),Y)\rightarrow H_r^1((0,\tau );Y)=\left\{u \in H^1((0,\tau), Y); \; u(\tau ) = 0 \right\},
\] 
the adjoint of $S$, is given by
\[
S^\ast h(t)=\int_t^\tau \lambda (s-t)h(s)ds,\;\; h\in H_r^1((0,\tau );Y).
\]

We fix $h\in H_r^1((0,\tau );Y)$ and we set $k=S^\ast h$. Then
\[
k'(t)= \lambda (0)h(t)-\int_t^\tau  \lambda '(s-t)h(s)ds.
\]
Hence
\begin{align*}
\left[|\lambda (0)| \|h(t)\|\right]^2&\le \left( \int_t^\tau \frac{|\lambda '(s-t)|}{|\lambda (0)|}[ |\lambda (0)|\|h(s)\|]ds +\|k'(t)\|\right)^2
\\
&\le 2\left( \int_t^\tau \frac{|\lambda '(s-t)|}{|\lambda (0)|}[ |\lambda (0)|\|h(s)\|]ds\right)^2 +2\|k'(t)\|^2
\\
&\le 2\frac{\|\lambda '\|_{L^2((0,\tau ))}^2}{|\lambda (0)|^2}\int_0^t [|\lambda (0)|\|h(s)\|]^2ds+2\|k'(t)\|^2.
\end{align*}
The last estimate is obtained  by applying Cauchy-Schwarz's inequality.

\smallskip
A simple application of Gronwall's lemma entails
\[
[|\lambda (0)|\|h(t)\|]^2\le 2e^{2\frac{\|\lambda '\|_{L^2((0,\tau ))}^2}{|\lambda (0)|^2}\tau }\|k'(t)\|^2.
\]
Therefore,
\[
\|h\|_{L^2((0,\tau) ;Y)}\leq \frac{\sqrt{2}}{|\lambda (0)|}e^{\frac{\|\lambda '\|_{L^2((0,\tau ))}^2}{|\lambda (0)|^2}\tau }\|k'\|_{L^2((0,\tau) ;Y)}.
\]
This inequality yields
\begin{equation}\label{2.8.1}
\|h\|_{L^2((0,\tau) ;Y)}\le \frac{\sqrt{2}}{|\lambda (0)|}e^{\frac{\|\lambda '\|_{L^2((0,\tau ))}^2}{|\lambda (0)|^2}\tau }\|S^\ast h\|_{H^1_r((0,\tau) ;Y)}.
\end{equation}
The adjoint of $S^\ast$, acting as a bounded operator from $[H_r((0,1);Y)]'$ into $L^2((0,\tau );Y)$, gives an extension of $S$. We denote by $\widetilde{S}$ this operator. By \cite[Proposition 4.1, page 1644]{tucsnak} $S^\ast$ defines an isomorphism from $[H_r((0,1);Y)]'$ onto $L^2((0,\tau );Y)$. In light of the fact that
\[
\|\widetilde{S}\|_{\mathscr{B}([H_r((0,1);Y)]';L^2((0,\tau );Y))}=\|S^\ast \|_{\mathscr{B}(L^2((0,\tau );Y);H_r((0,1);Y))},
\]
\eqref{2.8.1} implies
\begin{equation}\label{2.8.1b}
\frac{|\lambda (0)|}{\sqrt{2}}e^{-\frac{\|\lambda '\|_{L^2((0,\tau ))}^2}{|\lambda (0)|^2}\tau }\le \|\widetilde{S}\|_{\mathscr{B}([H_r((0,1);Y)]';L^2((0,\tau );Y))}.
\end{equation}
On the other hand, according to \cite[Proposition 2.13, page 1641]{tucsnak}, $\Psi $ possesses a unique bounded extension, denoted by $\widetilde{\Psi}$ from $\mathcal{Z}'$ into $[H_r((0,1);Y)]'$ and there exists a constant $c>0$ so that
\begin{equation}\label{2.8.2}
\|\widetilde{\Psi}\|_{\mathcal{B}(\mathcal{Z}';[H_r((0,1);Y)]')}\geq c.
\end{equation}
Consequently, $\widetilde{E}=\widetilde{S}\widetilde{\Psi}$ gives a unique extension of $E$ to an isomorphism from $\mathcal{Z}'$ onto $L^2((0,\tau );Y)$. 

\smallskip
We end up the proof by noting that \eqref{2.8} is a consequence of \eqref{2.8.1} and \eqref{2.8.2}.
\end{proof}

\subsection{An inverse source problem for an IBVP for the wave equation}

In the present subsection we are going to apply the result of the preceding subsection to $H=V\times L^2(\Omega )$, $H_1=D(A_a)$ equipped with its graph norm and $Y=L^2(\Gamma _1)$.

\smallskip
We consider the  the IBVP
\begin{equation}\label{2.9}
\left\{
\begin{array}{lll}
 \partial _t^2 u - \Delta u = \lambda (t)w \;\; &\mbox{in}\;   Q, 
 \\
u = 0 &\mbox{on}\;  \Sigma _0, 
\\
\partial _\nu u +a\partial _tu=0 &\mbox{on}\;  \Sigma _1, 
\\
u(\cdot ,0) =0,\; \partial_t u (\cdot ,0) =0.
\end{array}
\right.
\end{equation}

Let $(0,w)\in H_{-1}$ and $\lambda \in H^1((0,\tau ))$. From the comments in the preceding subsection, \eqref{2.9} has a unique solution $u_w$ so that $(u_w,\partial _tu_w) \in C([0,\tau ]; V\times L^2(\Omega ))$ and $\partial_\nu  u_w{_{|\Gamma _1}}\in L^2(\Sigma _1 )$.

\smallskip
We consider the inverse problem consisting in the determination of $w$, so that $(0,w)\in H_{-1}$, appearing in the IBVP \eqref{2.9} from the boundary measurement $\partial_\nu  u_w{_{|\Sigma _1}}$. Here the function $\lambda$ is assumed to be known.

\smallskip
Taking into account that $\{0\}\times V' \subset H_{-1}$, where $V'$ is the dual space of $V$, we obtain as a consequence of Corollary \ref{corollary2.1}:

\begin{proposition}\label{proposition2.1}
There exists a constant $C>0$ so that for any $\lambda \in H^1((0,\tau ))$ and $w\in V'$,
\begin{equation}\label{2.10}
 \|w\|_{V'}\leq C|\lambda (0)|e^{\frac{\|\lambda '\|^2_{L^2((0,\tau ))}}{|\lambda (0)|^2}\tau}\|\partial _\nu u\|_{L^2 (\Sigma _1)}.
\end{equation}
\end{proposition}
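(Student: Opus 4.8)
The plan is to deduce Proposition \ref{proposition2.1} from Theorem \ref{theorem2.1} by making the abstract setup concrete. First I would set $H=V\times L^2(\Omega)$, $A=A_a$ with $D(A)=D(A_a)$ carrying the graph norm so that $H_1=D(A_a)$, and $Y=L^2(\Gamma_1)$, with $C=C_a$ the admissible observation operator furnished by Corollary \ref{corollary1.1}. By that same corollary, $(A_a,C_a)$ is exactly observable at any time $\tau\ge\tau_0$, so the hypothesis of Theorem \ref{theorem2.1} is met. The solution $u_w$ of \eqref{2.9} is exactly the function $z$ of \eqref{2.4} applied to the source term $\lambda(t)(0,w)$, and the measurement $\partial_\nu u_w{_{|\Sigma_1}}$ is precisely $y=\widetilde{E}(0,w)$ in the notation of the theorem. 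Hence \eqref{2.8} gives
\[
\|(0,w)\|_{\mathcal{Z}'}\le \widetilde{\kappa}|\lambda(0)|e^{\frac{\|\lambda'\|^2_{L^2((0,\tau))}}{|\lambda(0)|^2}\tau}\|\partial_\nu u_w\|_{L^2(\Sigma_1)}.
\]

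The remaining work is to identify the space $\mathcal{Z}'=\big[(\beta-A_a^\ast)^{-1}(H+C_a^\ast Y)\big]'$ concretely enough to conclude that $\{0\}\times V'$ embeds continuously into $H_{-1}$ and that the $\mathcal{Z}'$-norm of $(0,w)$ dominates a constant times $\|w\|_{V'}$. The key observations are: $C_a^\ast Y\subset D(A_a^\ast)$, so $\mathcal{Z}\subset D(A_a^\ast)\subset H$ with continuous (in fact compact, or at least bounded) inclusion, whence by duality $H'\hookrightarrow\mathcal{Z}'$; and since $H=V\times L^2(\Omega)$ is identified with its own dual via the pivot structure, elements of $\{0\}\times V'\subset\{0\}\times V'$ are reached by the chain $\{0\}\times V'\subset H_{-1}\subset\mathcal{Z}'$, with a norm estimate $\|(0,w)\|_{\mathcal{Z}'}\ge c^{-1}\|w\|_{V'}$ coming from the boundedness of the inclusion $\mathcal{Z}\hookrightarrow \{0\}\times V$ (equivalently the surjectivity statement that $(\beta-A_a)\mathcal{Z}$ contains $H+C_a^\ast Y$, which controls the $V$-component). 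Combining this with the displayed inequality and absorbing the constant $c$ into $C$ yields \eqref{2.10}.

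The main obstacle I anticipate is the precise identification of $\mathcal{Z}'$ and the verification that the constant $C$ in \eqref{2.10} can be taken \emph{independent of $\lambda$}: this requires that the embedding constant relating $\|w\|_{V'}$ to $\|(0,w)\|_{\mathcal{Z}'}$ depends only on $a$ (hence, via Corollary \ref{corollary1.1}, only on $\delta_0,\delta_1$) and not on the source profile, which is exactly what Theorem \ref{theorem2.1}(iii) is designed to provide since $\widetilde{\kappa}$ there is independent of $\lambda$. A secondary technical point is checking that $C_a^\ast$ maps into $D(A_a^\ast)$, i.e. that the adjoint observation operator has the expected smoothing, which follows from the admissibility of $C_a$ for $e^{tA_a}$ established in Corollary \ref{corollary1.1} together with the general extrapolation theory of \cite{tucsnak} already invoked in the proof of Theorem \ref{theorem2.1}. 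Once these identifications are in place, Proposition \ref{proposition2.1} is immediate.
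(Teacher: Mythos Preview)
Your plan matches the paper's approach exactly: the authors derive Proposition~\ref{proposition2.1} directly from Theorem~\ref{theorem2.1} (with the choices $H=V\times L^2(\Omega)$, $A=A_a$, $Y=L^2(\Gamma_1)$, $C=C_a$, and exact observability supplied by Corollary~\ref{corollary1.1}), their only additional remark being the inclusion $\{0\}\times V'\subset H_{-1}$. One small correction to your sketch of the embedding step: admissibility only yields $C_a^\ast Y\subset H_{-1}$, not $C_a^\ast Y\subset D(A_a^\ast)$, so the inclusions actually read $D(A_a^\ast)\subset\mathcal{Z}\subset H$ and dually $H\subset\mathcal{Z}'\subset H_{-1}$; this does not alter the strategy, which---as in the paper---is delegated to the extrapolation framework of \cite{tucsnak}.
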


\subsection{Proof of Theorem \ref{theorem1.1}}

We start by observing that $u_a$ is also the unique solution of 
\begin{equation*}
\left\{
\begin{array}{lll}
 \int_\Omega u''(t)vdx=\int_\Omega \nabla u(t)\cdot \nabla vdx-\int_{\Gamma _1}au'(t)v,\;\; \textrm{for all}\; v\in V.
 \\
 u(0)=u^0,\;\; u'(0)=u^1.
\end{array}
\right.
\end{equation*}

Let $u=u_a-u_0$. Then $u$ is the solution of the following problem
\begin{equation}\label{2.11}
\left\{
\begin{array}{lll}
 \int_\Omega u''(t)vdx=\int_\Omega \nabla u(t)\cdot \nabla vdx-\int_{\Gamma _1}au'(t)v -\int_{\Gamma _1}au_0'(t)v,\;\; \textrm{for all}\; v\in V.
 \\
 u(0)=0,\;\; u'(0)=0.
\end{array}
\right.
\end{equation}

For $k$, $\ell \in \mathbb{Z}$, we set 
\begin{align*}
&\lambda_{k\ell}=[ (k+1/2)^2+(\ell +1/2)^2]\pi ^2
\\
&\phi_{k\ell}(x,y)=2\cos ((k+1/2)\pi x)\cos ((\ell +1/2)\pi y).
\end{align*}

We check in a straightforward manner that $u_0=\cos(\sqrt{\lambda_{k\ell}}t)\phi_{k\ell}$ when $(u^0,u^1)=(\phi_{k\ell},0)$. 

\smallskip
In the sequel $k$, $\ell$ are arbitrarily fixed. We set $\lambda (t)=\cos(\sqrt{\lambda_{k\ell}}t)$ and we define $w_a\in V'$ by
\[
w_a(v)=-\sqrt{\lambda_{k\ell}}\int_{\Gamma _1}a\phi_{k\ell}v.
\]

In that case \eqref{2.11} becomes
\begin{equation*}
\left\{
\begin{array}{lll}
 \int_\Omega u''(t)vdx=\int_\Omega \nabla u(t)\cdot \nabla vdx-\int_{\Gamma _1}au'(t)v +\lambda (t)w_a(v),\;\; \textrm{for all}\; v\in V.
 \\
 u(0)=0,\;\; u'(0)=0.
\end{array}
\right.
\end{equation*}
Consequently, $u$ is the solution of \eqref{2.9} with $w=w_a$. Applying Proposition \ref{proposition2.1}, we find
\begin{equation}\label{2.12}
\|w_a\|_{V'}\leq Ce^{\lambda_{k\ell}\tau ^2}\| \partial _\nu u\|_{L^2(\Sigma _1)}.
\end{equation}
But
\begin{equation}\label{2.13}
a_1(0)\left| \int_{\Gamma _1}(a\phi_{k\ell})^2d\sigma \right| =\frac{1}{\sqrt{\lambda_{k\ell}}}\left| w_a ((a_1\otimes a_2)\phi_{k\ell})\right|\leq \frac{1}{\sqrt{\lambda_{k\ell}}}\|w_a\|_{V'}\|(a_1\otimes a_2)\phi_{k\ell}\|_V,
\end{equation}
where we used $a_1(0)=a_2(0)$, and
\begin{equation}\label{2.14}
 \|(a_1\otimes a_2)\phi_{k\ell}\|_V \leq C_0\sqrt{\lambda _{kl}}\| a_1\otimes a_2\|_{H^1(\Omega )}.
 \end{equation}
 Here $C_0$ is a constant independent on $a$ and $\phi_{k\ell}$.

\smallskip
 We note $(a_1\otimes a_2)\phi_{k\ell}\in V$ even if  $a_1\otimes a_2\not\in V$.

\smallskip
Now a combination of \eqref{2.12}, \eqref{2.13} and \eqref{2.14} yields
\begin{equation*}
a_1(0)\left(\|a_1\phi _k\|_{L^2((0,1))}^2+\|a_2\phi _\ell\|_{L^2((0,1))}^2\right)\le C\| a_1\|_{H^1(0,1)}\| a_2\|_{H^1(0,1)}e^{\lambda_{k\ell}\tau ^2/2}\| \partial _\nu u\|_{L^2(\Sigma _1)},
\end{equation*}
where $\phi_k (s)=\sqrt{2}\cos ((k+1/2)\pi s)$. This and the fact that $m\leq a_j(0)$ and $\|a_j\|_{H^1((0,1))}\leq M$ imply
\begin{equation*}
\|a_1\phi _k\|_{L^2((0,1))}^2+\|a_2\phi _\ell\|_{L^2((0,1))}^2\le C\frac{M^2}{m}e^{\lambda_{k\ell}\tau ^2/2}\| \partial _\nu u\|_{L^2(\Sigma _1)}.
\end{equation*}
Hence, where $j=1$ or $2$,
\begin{equation*}
\|a_j\phi _k\|_{L^2((0,1))}^2\le C\frac{M^2}{m}e^{k^2\tau ^2\pi^2}\| \partial _\nu u\|_{L^2(\Sigma _1)}.
\end{equation*}

Let 
\[
a_j^k= \int_0^1a_j(x)\phi _k(x)dx,\;\; j=1,2.
\]

Since 
\[
|a_j^k|=\left| \int_0^1a_j(x)\phi _k(x)dx\right| \le \|a_j\phi _k\|_{L^1((0,1))}\leq \|a_j\phi _k\|_{L^2((0,1))},
\]
we get
\[
(a_j^k)^2\le C\frac{M^2}{m}e^{k^2\tau ^2\pi^2}\| \partial _\nu u\|_{L^2(\Sigma _1)}.
\]
On the other hand
\[
\| \partial _\nu u\|_{L^2(\Sigma _1)}=\|\Lambda _a (\phi_{kl})-\Lambda _0 (\phi_{kl})\|_{L^2(\Sigma )}\leq Ck^2\|\Lambda _a -\Lambda _0 \|.
\]
Hence
\begin{equation}\label{2.15}
(a_j^k)^2\le C\frac{M^2}{m}e^{k^2(\tau ^2\pi^2+1)}\|\Lambda _a -\Lambda _0 \|.
\end{equation}
Let $q=\frac{M^2}{m}$ and $\alpha =\tau ^2\pi^2+2$. We obtain in a straightforward manner from \eqref{2.15}
\[
\sum_{|k|\leq N}(a_j^k)^2\le Cqe^{\alpha N^2}\|\Lambda _a -\Lambda _0 \|.
\]
Consequently,
\begin{align*}
\|a_j\|_{L^2((0,1))}^2 &\le \sum_{|k|\leq N}(a_j^k)^2+\frac{1}{N^2}\sum_{|k|> N}k^2(a_j^k)^2
\\
&\le C\left(qe^{\alpha N^2}\|\Lambda _a -\Lambda _0 \|+ \frac{\|a_j\|_{H^1((0,1))}^2}{N^2}\right)
\\
&\le C\left(qe^{\alpha N^2}\|\Lambda _a -\Lambda _0 \|+ \frac{M^2}{N^2}\right)
\\
&
\le CM^2\left( \frac{1}{m}e^{\alpha N^2}\|\Lambda _a -\Lambda _0 \|+ \frac{1}{N^2}\right).
\end{align*}
That is 
\begin{equation}\label{2.16}
\|a_j\|_{L^2((0,1))}^2\leq CM^2\left( \frac{1}{m}e^{\alpha N^2}\|\Lambda _a -\Lambda _0 \|+ \frac{1}{N^2}\right).
\end{equation}

Assume that $\|\Lambda _a -\Lambda _0 \|\le \delta =me^{-\alpha}$. Let then $N_0\geq 1$ be the greatest integer so that
\[
\frac{C}{m}e^{\alpha N_0^2}\|\Lambda _a -\Lambda _0 \|\leq \frac{1}{N_0^2}.
\]
Using
\[
\frac{1}{m}e^{\alpha (N_0+1)^2}\|\Lambda _a -\Lambda _0 \|\le \frac{1}{(N_0+1)^2},
\]
we find
\[
(2N_0)^2\geq (N_0+1)^2\ge \frac{1}{\alpha +1}\ln \left( \frac{m}{\|\Lambda _a -\Lambda _0 \|}\right).
\]
This estimate in \eqref{2.16} with $N=N_0$ gives
\begin{equation}\label{2.17}
\|a_j\|_{L^2((0,1))} \le 2C\sqrt{\alpha +1}M\left| \ln \left(m^{-1}\|\Lambda _a -\Lambda _0 \|\right)\right|^{-1/2}.
\end{equation}

When $\|\Lambda _a -\Lambda _0 \|\ge \delta$, we have
\begin{equation}\label{2.18}
\|a_j\|_{L^2((0,1))} \le \frac{M}{\delta}\|\Lambda _a -\Lambda _0 \|.
\end{equation}

In light of \eqref{2.17} and \eqref{2.18}, we find a constants $c >0$, that can depend only on $\tau$,  so that
\[
 \|a_j\|_{L^2((0,1))} \le cM\left(\left| \ln \left(m^{-1}\|\Lambda _a -\Lambda _0 \|\right)\right|^{-1/2}+m^{-1}\|\Lambda _a -\Lambda _0 \|\right).
 \]

\appendix
\section{}\label{appendixA}
 
 \medskip
 We prove the following lemma
 \begin{lemma}\label{lemmaA1}
 Let $1/2<\alpha \leq1$ and $a\in C^\alpha ([0,1])$. Then the mapping $f \mapsto af$ defines a bounded operator on $H^{1/2}((0,1))$.
 \end{lemma}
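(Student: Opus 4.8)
The plan is to work with the equivalent Gagliardo description of $H^{1/2}((0,1))$, namely
\[
\|f\|_{H^{1/2}((0,1))}^2 \simeq \|f\|_{L^2((0,1))}^2 + [f]_{1/2}^2, \qquad [f]_{1/2}^2 = \int_0^1\!\!\int_0^1 \frac{|f(x)-f(y)|^2}{|x-y|^2}\,dx\,dy,
\]
and to estimate $\|af\|_{H^{1/2}}$ term by term. The $L^2$ contribution is immediate, since $\|af\|_{L^2} \le \|a\|_{L^\infty([0,1])}\|f\|_{L^2} \le \|a\|_{C^\alpha([0,1])}\|f\|_{L^2}$.

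For the seminorm I would use the algebraic identity $a(x)f(x) - a(y)f(y) = a(x)\big(f(x)-f(y)\big) + f(y)\big(a(x)-a(y)\big)$, valid for a.e.\ $(x,y)$, which gives
\[
|af(x)-af(y)|^2 \le 2|a(x)|^2|f(x)-f(y)|^2 + 2|f(y)|^2|a(x)-a(y)|^2.
\]
Dividing by $|x-y|^2$ and integrating, the first piece is controlled by $2\|a\|_{L^\infty([0,1])}^2[f]_{1/2}^2$. For the second piece I would insert the H\"older bound $|a(x)-a(y)| \le [a]_\alpha |x-y|^\alpha$, obtaining
\[
2\int_0^1\!\!\int_0^1 \frac{|f(y)|^2|a(x)-a(y)|^2}{|x-y|^2}\,dx\,dy \le 2[a]_\alpha^2 \int_0^1 |f(y)|^2\Big(\int_0^1 |x-y|^{2\alpha-2}\,dx\Big)\,dy.
\]

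The key point — and the only place the hypothesis $\alpha>1/2$ enters — is that $2\alpha-2>-1$, so the inner integral converges and is bounded uniformly in $y\in[0,1]$ by $\int_{-1}^{1}|t|^{2\alpha-2}\,dt = \tfrac{2}{2\alpha-1}$. Hence the second piece is at most $\tfrac{4}{2\alpha-1}[a]_\alpha^2\|f\|_{L^2}^2$. Summing the three bounds yields $\|af\|_{H^{1/2}((0,1))}^2 \le C(\alpha)\,\|a\|_{C^\alpha([0,1])}^2\,\|f\|_{H^{1/2}((0,1))}^2$ with $C(\alpha)$ depending only on $\alpha$, which is the assertion. There is no genuine obstacle beyond keeping track of which norm of $a$ appears at each step; the only subtlety is the exponent bookkeeping in the singular integral, which is precisely what forces $\alpha>1/2$.
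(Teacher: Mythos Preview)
Your proof is correct and follows essentially the same approach as the paper: both use the Gagliardo seminorm description of $H^{1/2}((0,1))$, the same algebraic splitting $a(x)f(x)-a(y)f(y)=a(x)\bigl(f(x)-f(y)\bigr)+f(y)\bigl(a(x)-a(y)\bigr)$, and the observation that $|x-y|^{2\alpha-2}$ is integrable in $x$ uniformly in $y$ exactly when $\alpha>1/2$. Your tracking of constants (the factor $2$ from $(p+q)^2\le 2p^2+2q^2$ and the bound $\int_{-1}^1|t|^{2\alpha-2}\,dt=\tfrac{2}{2\alpha-1}$) is in fact slightly more careful than the paper's.
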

 
 \begin{proof}
 We recall that $H^{1/2}((0,1))$ consists in functions $f\in L^2((0,1))$ with finite norm
 \[
 \|f\|_{H^{1/2}((0,1))}=\left( \|f\|^2_{L^2((0,1))}+\int_0^1\int_0^1\frac{|f(x)-f(y)|^2}{|x-y|^2}dxdy\right)^{1/2}.
\]
Let $a\in C^\alpha( [0,1])$. We have
\[
\frac{|a(x)f(x)-a(y)f(y)|^2}{|x-y|^2} \leq \|a\|_{L^\infty (0,1)}^2\frac{|f(x)-f(y)|^2}{|x-y|^2}+|f(y)|^2\frac{[a]_\alpha^2}{|x-y|^{2(1-\alpha)}},
\]
where
\[
[a]_\alpha =\sup \{|a(x)-a(y)||x-y|^{-\alpha};\; x,y\in [0,1],\; x\neq y\}.
\]
Using that $1/2<\alpha \leq1$, we find that $x\rightarrow |x-y|^{-2(1-\alpha)}\in L^1((0,1))$, $y\in [0,1]$, and
\[
\int_0^1\frac{dx}{|x-y|^{2(1-\alpha)}}\leq \frac{1}{2\alpha -1},\;\; y\in [0,1].
\]
Hence $af\in H^{1/2}((0,1))$ with
\[
\|af\|_{H^{1/2}((0,1))}\leq \frac{1}{2\alpha -1}\|a\|_{C^\alpha ([0,1])}\|f\|_{H^{1/2}((0,1))}.
\]
Here
\[
\|a\|_{C^\alpha ([0,1])}=\|a\|_{L^\infty ((0,1))}+[a]_\alpha .
\]
 \end{proof}

\section{}\label{appendixB}

We give the proof of the following lemma

\begin{lemma}\label{lemmaB1}
Let $a\in \mathscr{A}$ and $A_a$ be the unbounded operator defined on $V\times L^2(\Omega )$ by
\[
A_a= (w,\Delta v),\quad D(A_a)=\{ (v,w)\in V\times V;\; \Delta v\in L^2(\Omega )\; \textrm{and}\; \partial _\nu v=-aw\; \textrm{on}\; \Gamma _1\}.
\]
Then $A_a$ is m-dissipative.
\end{lemma}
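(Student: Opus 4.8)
\textbf{Proof plan for Lemma \ref{lemmaB1}.}

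The plan is to verify the two defining properties of an m-dissipative operator on the Hilbert space $H=V\times L^2(\Omega)$: first that $A_a$ is dissipative, i.e. $\langle A_a(v,w),(v,w)\rangle_H\le 0$ for every $(v,w)\in D(A_a)$, and second that $A_a$ is maximal, i.e. $\operatorname{Range}(I-A_a)=H$. I will equip $H$ with the natural inner product $\langle (v_1,w_1),(v_2,w_2)\rangle_H=\int_\Omega\nabla v_1\cdot\nabla v_2\,dx+\int_\Omega w_1w_2\,dx$, which is indeed an inner product on $V\times L^2(\Omega)$ because $V$ has zero trace on $\Gamma_0$ and hence the Poincaré inequality makes $\|\nabla\cdot\|_{L^2(\Omega)}$ an equivalent norm on $V$.

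For dissipativity I would take $(v,w)\in D(A_a)$ and compute
\[
\langle A_a(v,w),(v,w)\rangle_H=\int_\Omega\nabla w\cdot\nabla v\,dx+\int_\Omega (\Delta v)\,w\,dx.
\]
Since $\Delta v\in L^2(\Omega)$, $v\in V$ and $\partial_\nu v=-aw$ on $\Gamma_1$ while $w=0$ on $\Gamma_0$ (as $w\in V$), Green's formula gives $\int_\Omega(\Delta v)w\,dx=-\int_\Omega\nabla v\cdot\nabla w\,dx+\int_{\Gamma_1}(\partial_\nu v)w\,d\sigma=-\int_\Omega\nabla v\cdot\nabla w\,dx-\int_{\Gamma_1}a w^2\,d\sigma$. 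Adding the two terms, the gradient contributions cancel and we are left with $\langle A_a(v,w),(v,w)\rangle_H=-\int_{\Gamma_1}a w^2\,d\sigma\le 0$ because $a=(a_1,a_2)\ge 0$; note here $w_{|\Gamma_1}\in H^{1/2}(\Gamma_1)$ so the boundary integral makes sense and, by Lemma \ref{lemmaA1}, multiplication by $a_j$ preserves $H^{1/2}((0,1))$, so $aw^2\in L^1(\Gamma_1)$.

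For maximality I must solve, given $(f,g)\in V\times L^2(\Omega)$, the equation $(v,w)-A_a(v,w)=(f,g)$, i.e. $v-w=f$ and $w-\Delta v=g$. Eliminating $w=v-f$ reduces this to the elliptic problem $v-\Delta v=g+f$ in $\Omega$ with $v=0$ on $\Gamma_0$ and the Robin-type condition $\partial_\nu v=-a(v-f)$ on $\Gamma_1$, i.e. $\partial_\nu v+av=af$ on $\Gamma_1$. I would set this up variationally on $V$ with bilinear form $B(v,\varphi)=\int_\Omega(\nabla v\cdot\nabla\varphi+v\varphi)\,dx+\int_{\Gamma_1}av\varphi\,d\sigma$ and show $B$ is bounded (using the trace theorem $H^1(\Omega)\to H^{1/2}(\Gamma_1)\hookrightarrow L^2(\Gamma_1)$ together with boundedness of multiplication by $a_j$ on $H^{1/2}((0,1))$ from Lemma \ref{lemmaA1}) and coercive on $V$ (immediate since $a\ge 0$ makes the boundary term nonnegative, so $B(v,v)\ge\|v\|_{H^1(\Omega)}^2$). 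Lax--Milgram then yields a unique $v\in V$; testing with $\varphi\in\mathscr{D}(\Omega)$ shows $\Delta v=v-g-f\in L^2(\Omega)$, and then integrating by parts recovers the Robin condition $\partial_\nu v=-a(v-f)$ on $\Gamma_1$ in the appropriate weak sense. Setting $w=v-f$ we must finally check $w\in V$, which holds since $v\in V$ and $f\in V$, and that $(v,w)\in D(A_a)$, which is exactly what we verified. The main obstacle is the careful handling of the boundary term: one must make sense of $\int_{\Gamma_1}av\varphi\,d\sigma$ and of the Robin condition on the non-smooth boundary with only $C^\alpha$ coefficients, and this is precisely where Lemma \ref{lemmaA1} and the trace results behind Corollary \ref{corollary2.1} (or Theorem \ref{theorem1.3}) are needed; with those in hand the Lax--Milgram argument is routine.
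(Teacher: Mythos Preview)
Your proof is correct and follows the same two-step strategy as the paper: dissipativity via Green's formula (your single application is in fact a bit cleaner than the paper's double application) and maximality via Lax--Milgram on $V$.

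The one genuine difference is in the maximality step. You show $\operatorname{Range}(I-A_a)=H$, which after eliminating $w=v-f$ leads to a Robin problem $v-\Delta v=f+g$, $\partial_\nu v+av=af$ on $\Gamma_1$, and hence to a bilinear form carrying the boundary term $\int_{\Gamma_1}av\varphi\,d\sigma$. The paper instead shows that $A_a$ itself is onto: given $(f,g)$ it sets $w=f$ directly and solves the pure Neumann--Dirichlet problem $-\Delta v=g$, $v=0$ on $\Gamma_0$, $\partial_\nu v=-af$ on $\Gamma_1$, so the $a$-dependence sits only in the linear functional, not in the bilinear form. Your route is the textbook Lumer--Phillips formulation and is arguably safer; the paper's route gives a slightly simpler variational problem. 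Either way the Lax--Milgram verification is routine once the boundary terms are controlled, and you correctly identify Lemma~\ref{lemmaA1} and the trace theorem as the tools for that.
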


\begin{proof}
Let $\langle \cdot ,\cdot \rangle$ be scalar product in $V\times L^2(\Omega )$. That is
\[
\langle (v_1,w_1),(v_2,w_2)\rangle =\int_\Omega \nabla v_1\cdot \nabla \overline{v_2}dx+\int_\Omega w_1\overline{w_2}dx,\;\; (v_j,w_j)\in V\times L^2(\Omega ),\; j=1,2.
\]
For $(v_1,w_1)\in D(A_a)$, we have
\begin{align}
\langle A_a(v_1,w_1),(v_1,w_1)\rangle &= \langle (w_1,\Delta v_1),(v_1,w_1)\rangle \label{B1}
\\
&=\int_\Omega \nabla w_1\cdot \nabla \overline{v_1}dx+\int_\Omega \Delta v_1\overline{w_1}dx \nonumber
\end{align}
Applying twice Green's formula, we get
\begin{align}
&\int_\Omega \nabla w_1\cdot \nabla \overline{v_1}dx=-\int_\Omega  w_1\Delta \overline{v_1}dx+\int_{\Gamma _1}w_1\partial _\nu  \overline{v_1}d\sigma ,\label{B2}
\\
&\int_\Omega \Delta v_1\overline{w_1}dx =- \int_\Omega \nabla v_1\cdot \nabla \overline{w_1}dx - \int_{\Gamma _1}aw_1\overline{w_1}d\sigma.\label{B3}
\end{align}
We take the sum side by side of identities \eqref{B2} and \eqref{B3}. Using that $\partial_\nu v_1=-aw_1$ on $\Gamma _1$ we obtain 
\begin{align*}
\int_\Omega \nabla w_1\cdot \nabla \overline{v_1}dx+\int_\Omega \Delta v_1\overline{w_1}dx &=-\int_\Omega  w_1\Delta \overline{v_1}dx- \int_\Omega \nabla v_1\cdot \nabla \overline{w_1}dx - 2 \, \int_{\Gamma_1} a \left|w_1\right|^2 d\sigma 
\\
&=-\langle (v_1,w_1),A_a(v_1,w_1)\rangle - 2 \, \int_{\Gamma_1} a \left|w_1\right|^2d\sigma.
\end{align*}
This and \eqref{B1} yield
\[
\Re \langle A_a(v_1,w_1),(v_2,w_2)\rangle = -  \, \int_{\Gamma_1} a \left|w_1\right|^2d\sigma \leq 0.
\]
In other words, $A_a$ is dissipative. 

\smallskip
We complete the proof by showing that $A_a$ is onto implying that $A_a$ is m-dissipative. To this end we are going to show that for each $(f,g)\in V\times L^2(\Omega )$, the problem 
\[
w=f, \quad -\Delta v=g.
\]
has a unique solution $(v,w)\in D(A_a)$.

\smallskip
In light of the fact $\psi \rightarrow \left(\int_\Omega |\nabla \psi |^2dx\right)^{1/2}$ defines an equivalent norm on $V$, we can apply Lax-milgram's lemma. We get that there exists a unique $v\in V$ satisfying
\[
\int_\Omega \nabla v\cdot \nabla \overline{\psi}dx=\int_\Omega g\overline{\psi}dx -\int_{\Gamma _1}aw\overline{\psi}d\sigma ,\;\; \psi \in V.
\]
From this identity, we deduce in a standard way that $-\Delta v=g$ and $\partial _\nu v=-aw$ on $\Gamma _1$. The proof is then complete
\end{proof}


\bigskip

\end{document}